\numberwithin{equation}{section}
\theoremstyle{plain}
\numberwithin{equation}{section}
\newtheorem{theorem}[equation]{Theorem}
\newtheorem{corollary}[equation]{Corollary}
\newtheorem{lemma}[equation]{Lemma}
\theoremstyle{definition}
\newtheorem*{definition}{Definition}
\newtheorem{example}[equation]{Example}
\newtheorem{construction}[equation]{Construction}
\theoremstyle{definition}
\newtheorem{remark}[equation]{Remark}
\newtheorem*{conjecture}{Conjecture}
\renewcommand{\P}{\mathscr{P}}
\newcommand{\C}{\mathscr{C}}
\newcommand\Ann{\operatorname{Ann}}
\newcommand{\K}{\operatorname{\bf{K}}\nolimits}
\newcommand\Ext{\operatorname{Ext}}
\newcommand\Homol{\operatorname{H}}
\newcommand\cx{\operatorname{cx}}
\newcommand\op{\operatorname{op}}
\newcommand\ot{\otimes}
\renewcommand\mod{\operatorname{mod}}
\newcommand\Hom{\operatorname{Hom}}
\newcommand\FPdim{\operatorname{FPdim}}
\newcommand\Hoch{\operatorname{HH}}
\newcommand\HH{\Hoch}
\newcommand\unit{\mathbf{1}}
\newcommand{\DOT}{\setlength{\unitlength}{1pt}\begin{picture}(2.5,2)(1,1)\put(2,3){\circle*{2}}\end{picture}}
\newcommand{\bu}{\DOT}
\newcommand{\Coh}{\operatorname{H}\nolimits}
\newcommand{\Ho}{\operatorname{\Coh^{\bu}}\nolimits}
\newcommand{\Maxspec}{\operatorname{MaxSpec}\nolimits}
\newcommand{\s}{\Sigma}
\newcommand{\az}{\mathfrak{a}}
\newcommand{\n}{\mathfrak{n}}
\newcommand{\m}{\mathfrak{m}}
\newcommand{\diagram}[3]{\matrix (#1) [matrix of math nodes,row
  sep={#2},column sep={#3},text height=1.5ex,text
  depth=0.25ex]}
\def\blx@maxline{77}
\begin{document}
\title[Homology of complexes]{Homology of complexes over finite tensor categories}

\author{Petter Andreas Bergh}

\address{Petter Andreas Bergh \\ Institutt for matematiske fag \\
NTNU \\ N-7491 Trondheim \\ Norway} \email{petter.bergh@ntnu.no}

\subjclass[2020]{16E40, 16T05, 18E10, 18G35, 18M05}
\keywords{Finite tensor categories; finite dimensional algebras; homology of complexes}

\begin{abstract}
We generalize a recent result by J.F.\ Carlson to finite tensor categories having finitely generated cohomology. Specifically, we show that if the Krull dimension of the cohomology ring is sufficiently large, then there exist infinitely many non-isomorphic and nontrivial bounded complexes of projective objects, and with small homology. We also prove a version for finite dimensional algebras with finitely generated cohomology.
\end{abstract}

\maketitle

\section{Introduction}\label{sec:intro}

Let $p$ be a prime and $G$ an elementary abelian $p$-group of rank $d$, i.e.\ $G \simeq \left ( \mathbb{Z}/p \mathbb{Z} \right )^d$. In \cite{GCa1}, G.\ Carlsson conjectured that if $G$ acts freely on a nontrivial finite CW-complex $X$, then
$$\sum_{i \in \mathbb{Z}} \dim_k \Homol_i ( X; k ) \ge 2^d$$
where $k =  \mathbb{Z}/p \mathbb{Z}$. In \cite{GCa2}, he settled the case when $p=2$ and $d \le 3$, and some further isolated cases have also been confirmed in the literature. However, the conjecture remains open in general.

The algebraic version of the conjecture -- also from \cite{GCa1}  -- states that if $D$ is a nontrivial finite complex of finitely generated free $kG$-modules, then
$$\sum_{i \in \mathbb{Z}} \dim_k \Homol_i ( D ) \ge 2^d$$
 As explained (for $p=2$) in \emph{loc.\ cit.}, the algebraic version actually implies the CW-version. Moreover, in \cite{GCa2}, Carlsson also settled this version when $p=2$ and $d \le 3$. Other partial results exist here as well, for example the case when the complex $D$ has length two, in any prime characteristic; cf.\ \cite[Corollary 2.1]{AS}.

However, the algebraic version turns out not to hold in general. Namely, in \cite{IW}, S.\ Iyengar and M.\ Walker provided a counterexample when $p$ is odd and $d \ge 8$. The construction they provide is based on properties of exterior algebras, in particular the existence of a so-called Lefschetz element when there are an even number of generators. It is natural to ask if one can use this construction to obtain counterexamples to the CW-version of Carlsson's conjecture, but in \cite{RS} it was shown that this is not possible.

In the recent paper \cite{Ca}, J.\ Carlson extends the counterexamples from \cite{IW} to arbitrary groups. More precisely, for a field $k$ of odd characteristic $p$, and \emph{any} finite group $G$ of $p$-rank $d$ at least $8$, he shows that there exist infinitely many non-isomorphic and nontrivial finite complexes $D$ of projective $kG$-modules, with $\sum_{i \in \mathbb{Z}} \dim_k \Homol_i ( D ) < 2^d$. Of course, when $G$ is a $p$-group, then the projective $kG$-modules are free, and so Carlson's result provides a whole range of counterexamples to Carlsson's algebraic conjecture. Moreover, the counterexamples from \cite{IW} are recovered; the complexes constructed by Iyengar and Walker are quasi-isomorphic to complexes constructed in \cite{Ca}.

In this paper, we prove an analogous version of Carlson's result for finite tensor categories. For such a category $\left ( \C, \ot, \unit \right )$ over a ground field $k$, one can not use the vector space dimension to measure the size of the homology of a complex, since the homology is itself an object of $\C$, and therefore in general not a vector space. Instead, we measure the homology in terms of an arbitrary nontrivial additive function on $\C$. In other words, we use a function $f$ that assigns to every object $M \in \C$ a non-negative real number $f(M)$, in such a way that $f$ is additive on short exact sequences, and $f( \unit ) \neq 0$. We show that when the characteristic of $k$ is odd, and $\C$ is sufficiently nice with a cohomology ring of Krull dimension $d$ at least $8$, then there exist infinitely many non-isomorphic and nontrivial bounded complexes $D$ of projective objects, with 
$$\frac{1}{f ( \unit )} \sum_{i \in \mathbb{Z}} f ( \Homol_i(D) ) < 2^d$$
Two important such additive functions are the Frobenius-Perron dimension, and the length function. Given any object $M \in \C$, the inequality $\ell (M) \le \FPdim_{\C}(M)$ holds, and $\ell ( \unit ) = \FPdim_{\C}( \unit ) =1$. Thus, when $\C$ is as above, then there exist infinitely many non-isomorphic and nontrivial bounded complexes $D$ of projective objects, with 
$$\sum_{i \in \mathbb{Z}} \ell ( \Homol_i(D) ) \le \sum_{i \in \mathbb{Z}} \FPdim_{\C} ( \Homol_i(D) ) < 2^d$$
In particular, we can apply this result to suitably nice finite dimensional Hopf algebras over $k$, since in this case the Frobenius-Perron dimension of a finitely generated module coincides with the vector space dimension. Thus for such a Hopf algebra, there exist infinitely many non-isomorphic and nontrivial bounded complexes $D$ of projective modules, with 
$$\sum_{i \in \mathbb{Z}} \dim_k \Homol_i(D) < 2^d$$
Carlson's result from \cite{Ca} is a particular case of this last result, applied to the finite dimensional Hopf algebra $kG$.

We also prove a version of the main result for finite dimensional algebras having finitely generated cohomology. The cohomology ring we work with is then the Hochschild cohomology ring, which, like the cohomology ring of a finite tensor category, is graded-commutative. When the cohomology is finitely generated, and the algebra admits a module of sufficiently large complexity, the theorem shows that there exist infinitely many non-isomorphic and nontrivial bounded complexes of projective modules, with small homology. The size of the homology modules is measured by the vector space dimension.

\subsection*{Acknowledgments}
I would like to thank Jon F.\ Carlson for very helpful comments and conversations on this topic. I would also like to thank the organizers of the Representation Theory program hosted by the Centre for Advanced Study at The Norwegian Academy of Science and Letters, where I spent parts of fall 2022.

\section{Preliminaries}\label{sec:prelim}

Let us fix a field $k$ -- not necessarily algebraically closed -- together with a finite tensor $k$-category $\left ( \C, \ot, \unit \right )$. Thus $\C$ is a locally finite $k$-linear abelian category, with a finite set of isomorphism classes of simple objects. There are enough projective objects, and every object admits a projective cover, and therefore a minimal projective resolution. Moreover, there is an associative (up to functorial isomorphisms) bifunctor
\begin{center}
\begin{tikzpicture}
\diagram{d}{3em}{3em}{
 \C \times \C & \C \\
 };
\path[->, font = \scriptsize, auto]
(d-1-1) edge node{$\ot$} (d-1-2);
\end{tikzpicture}
\end{center}
called the tensor product, which is compatible with the abelian structure of $\C$, together with a unit object $\unit \in \C$ (with respect to $\ot$) which is simple as an object of $\C$. Finally, the category $\C$ is rigid, meaning that all objects have left and right duals.

A typical example is the category of finitely generated left $kG$-modules, where $G$ is a finite group. In this case, the finite tensor category is symmetric, meaning that the tensor product is commutative in a certain strong sense. More generally, if $A$ is a finite dimensional Hopf algebra, then the category of finitely generated left $A$-modules is a finite tensor category, with a non-commutative tensor product in general. Recently, a very interesting class of symmetric finite tensor categories were introduced in \cite{BE1, BEO, C}.

\begin{remark}\label{rem:properties}
(1) Finite tensor categories enjoy a whole range of important properties. First of all, since the underlying abelian category is locally finite -- that is, the morphism spaces are finite dimensional and every object has finite length -- the Krull-Schmidt Theorem and the Jordan-H{\"o}lder Theorem hold, cf.\ \cite[Section 1.5]{EGNO}. In other words, every object decomposes uniquely (up to isomorphism) as a finite direct sum of indecomposable objects, and admits a Jordan-H{\"o}lder series with unique simple multiplicities. Secondly, the existence of dual objects has far reaching consequences. For example, by \cite[Proposition 4.2.1 and Proposition 4.2.12]{EGNO}, it implies that the tensor product $\ot$ is bi-exact, and that the projective objects form a two-sided ideal: the tensor product between a projective object and any other object is again projective. Moreover, by \cite[Proposition 6.1.3 and Remark 6.1.4]{EGNO}, the existence of duals implies that the category is quasi-Frobenius, that is, the projective and injective objects coincide.

(2) Since $\unit$ is a simple object, the $k$-algebra $\Hom_{\C}( \unit, \unit )$ is a division ring. Moreover, this ring is actually commutative (see below), and so it is a finite field extension of $k$. In particular, when $k$ is algebraically closed, then $\Hom_{\C}( \unit, \unit ) = k$.
\end{remark}

Given objects $M,N \in \C$, we denote by $\Ext_{\C}^*(M,N)$ the graded $k$-vector space $\oplus_{n=0}^{\infty} \Ext_{\C}^n(M,N)$. When $M=N$, this is a graded $k$-algebra with Yoneda composition as multiplication. The cohomology algebra $\Ext_{\C}^*( \unit, \unit )$ of the unit object $\unit$ is denoted by $\Coh^*( \C )$; this is the \emph{cohomology ring} of $\C$, and it is graded-commutative by \cite[Theorem 1.7]{SA}. The tensor product induces a homomorphism
\begin{center}
\begin{tikzpicture}
\diagram{d}{3em}{4em}{
 \Coh^*( \C ) & \Ext_{\C}^*(M,M) \\
 };
\path[->, font = \scriptsize, auto]
(d-1-1) edge node{$- \ot M$} (d-1-2);
\end{tikzpicture}
\end{center}
of graded $k$-algebras, making $\Ext_{\C}^*(M,N)$ both a left and a right module over $\Coh^*( \C )$, via $- \ot N$ and $- \ot M$ followed by Yoneda composition. By modifying the proof of \cite[Theorem 1.1]{SS04}, one can show that for elements $\eta \in \Coh^m( \C )$ and $\theta \in \Ext_{\C}^n(M,N)$, there is an equality
$$( \eta \ot N ) \circ \theta = (-1)^{mn} \theta \circ ( \eta \ot M )$$
where the symbol $\circ$ denotes Yoneda composition. In other words, the left and the right scalar actions of $\Coh^*( \C )$ on $\Ext_{\C}^*(M,N)$ coincide up to a sign, when we consider homogeneous elements. 

In \cite{EO}, Etingof and Ostrik conjectured the following, which is still open:
\begin{conjecture}
\sloppy The cohomology ring $\Coh^*( \C )$ is finitely generated, and $\Ext_{\C}^*(M,M)$ is a finitely generated $\Coh^*( \C )$-module for all objects $M \in \C$.
\end{conjecture}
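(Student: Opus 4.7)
Since the final numbered statement is the (still-open) Etingof--Ostrik conjecture, I cannot offer a genuine proof plan; what I can do is describe the strategies that have succeeded in special cases and indicate where each of them breaks down.

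The template to imitate is the Evens--Venkov theorem for finite groups and its extension by Friedlander--Suslin to finite group schemes. In the group case one chooses an elementary abelian $p$-subgroup $E \le G$ of maximal rank; the cohomology $\Coh^*(E,k)$ is visibly a finitely generated $k$-algebra, and a transfer/norm argument shows that $\Coh^*(G,k)$ is a finitely generated $\Coh^*(E,k)$-module and hence itself finitely generated. For finite group schemes, Friedlander and Suslin instead find a collection of universal classes in $\Coh^*(\GL_n)$ coming from Frobenius twists, and use them to kill potential obstructions. In both settings the module statement for $\Ext^*_{\C}(M,M)$ is then derived from the ring statement by standard Eckmann--Shapiro-type arguments.

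To adapt this to a general finite tensor category $(\C, \ot, \unit)$, I would try to build a large graded-commutative Noetherian subring $R \subseteq \Coh^*(\C)$ over which $\Coh^*(\C)$ is finite. A natural attempt is to choose a flat deformation or a suitable filtration of $\C$ so that the associated graded category $\gr \C$ is close to the representation category of a commutative Hopf algebra (where finite generation is known), and then use a May-type spectral sequence to transfer finite generation from $\Coh^*(\gr \C)$ to $\Coh^*(\C)$, arguing that enough permanent cycles survive. The module statement would follow by showing that, via $- \ot M \colon \Coh^*(\C) \to \Ext^*_{\C}(M,M)$, the target is a subquotient of a finitely generated $\Coh^*(\C)$-module in a controlled way; here the sign-commutativity displayed earlier in the preliminaries is what allows one to treat left and right actions interchangeably.

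The main obstacle is that, unlike group algebras or concrete Hopf algebras, a general finite tensor category admits no canonical ``elementary abelian subobject'', no universal target such as $\GL_n$, no general transfer map, and in particular no canonical filtration whose associated graded is commutative. This is why the known affirmative results cover only special families: finite group schemes, small quantum groups at roots of unity, certain pointed and copointed Hopf algebras, Drinfeld doubles of these, and categories obtained from them by standard categorical operations. A full proof would seem to require either a purely categorical replacement for the norm/transfer construction, or a uniform mechanism to realize any finite tensor category as a deformation of something semisimple or commutative; both avenues remain wide open, which is presumably why the author immediately adopts this conjecture as a hypothesis rather than attempting to prove it.
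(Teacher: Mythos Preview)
Your assessment is correct: the statement is the open Etingof--Ostrik conjecture, and the paper does not prove it either --- it explicitly labels it a conjecture, notes that it remains open, and then packages it as the hypothesis \textbf{Fg} to be assumed in the main results. Your survey of known strategies and their obstructions is reasonable context, but no proof is expected here.
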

Note that when $\C$ has finitely generated cohomology as in the conjecture, then for all objects $M,N \in \C$, the $\Coh^* ( \C )$-module $\Ext_{\C}^*(M,N)$ is finitely generated, and not just the two modules $\Ext_{\C}^*(M,M)$ and $\Ext_{\C}^*(N,N)$. Namely, the $\Coh^* ( \C )$-module $\Ext_{\C}^*(M \oplus N,M \oplus N)$ is finitely generated, and it has $\Ext_{\C}^*(M,N)$ as a direct summand. 

\begin{definition}
We say that the finite tensor category $\C$ satisfies the \emph{finiteness condition} \textbf{Fg} if the cohomology ring $\Coh^*( \C )$ is finitely generated, and $\Ext_{\C}^*(M,M)$ is a finitely generated $\Coh^*( \C )$-module for all objects $M \in \C$.
\end{definition}

When $\C$ is the category of finitely generated left $kG$-modules for a finite group $G$, then \textbf{Fg} holds by a classical result of Evens and Venkov; cf.\ \cite{E, V1, V2}. However, the conjecture remains open for module categories of arbitrary finite dimensional Hopf algebras. Recently, Benson and Etingof proved in \cite{BE2} that \textbf{Fg} holds for the symmetric finite tensor categories introduced in \cite{BE1, BEO, C}.

When a finite tensor category has finitely generated cohomology, then there is a rich theory of cohomological support varieties. Define $\Ho ( \C )$ to be $\Coh^{2*}( \C )$ when the characteristic of $k$ is not $2$, and $\Coh^{*}( \C )$ when the characteristic is $2$. Since $\Coh^*( \C )$ is graded-commutative, the algebra $\Ho ( \C )$ is commutative in the ordinary sense. Moreover, when the characteristic of $k$ is not $2$, then the odd degree homogeneous elements of $\Coh^*( \C )$ are nilpotent. Therefore, when \textbf{Fg} holds, then $\Ho ( \C )$ is finitely generated, and $\Ext_{\C}^*(M,M)$ is a finitely generated $\Ho ( \C )$-module for all objects $M \in \C$.

We denote the unique homogeneous maximal ideal of $\Ho ( \C )$ by $\m_0$, i.e.\ $\m_0 = \Ho ( \C ) \cap \Coh^{\ge 1}( \C )$; recall from Remark \ref{rem:properties}(2) that $\Coh^0( \C )$ is a field. The set of all maximal ideals of $\Ho ( \C )$ is denoted by $\Maxspec \Ho ( \C )$. The \emph{support variety} of an object $M \in \C$ is now defined as
$$V_{\C} (M) = \{ \m_0 \} \cup \{ \m \in \Maxspec \Ho ( \C ) \mid \Ann_{\Ho ( \C )} \Ext_{\C}^*(M,M) \subseteq \m \}$$
When \textbf{Fg} holds, then these support varieties encode important homological information on the objects, as shown in \cite{BPW1, BPW2}. Many of the classical results on support varieties for modules over group algebras carry over to this setting.

\section{The main result}\label{sec:main}

For a finite tensor category $\C$, we denote by $\K^b ( \C )$ the triangulated category of bounded complexes over $\C$, with its usual suspension $\s \colon \K^b ( \C ) \longrightarrow \K^b ( \C )$ that shifts a complex one step to the left and changes the sign on the differentials. The distinguished triangles are defined in terms of mapping cones of chain maps. The tensor product $\ot$ induces a monoidal structure on $\K^b ( \C )$, compatible with the triangulated structure in such a way that $\K^b ( \C )$ is a \emph{triangulated tensor category} in the sense of \cite{BKSS}, that is, a possibly noncommutative tensor triangulated category. The tensor product is exact; when we tensor a distinguished triangle with an object, the result is another distinguished triangle. In this process, we implicitly use some particular isomorphisms. Namely, given complexes $X,Y \in \K^b ( \C )$, there are canonical isomorphisms
\begin{center}
\begin{tikzpicture}
\diagram{d}{3em}{3em}{
 \s \left ( X \ot Y \right ) & \s X \ot Y &  \s \left ( X \ot Y \right ) & X \ot \s Y \\
 };
\path[->, font = \scriptsize, auto]
(d-1-1) edge node{$\lambda_{X,Y}$} (d-1-2)
(d-1-3) edge node{$\rho_{X,Y}$} (d-1-4);
\end{tikzpicture}
\end{center}
natural in $X$ and $Y$, with the property that the diagram
\begin{center}
\begin{tikzpicture}
\diagram{d}{3em}{3em}{
 \s^2 \left ( X \ot Y \right ) & \s \left ( \s X \ot Y \right ) \\
 \s \left ( X \ot \s Y \right ) & \s X \ot \s Y \\
 };
\path[->, font = \scriptsize, auto]
(d-1-1) edge node{$\s \lambda_{X,Y}$} (d-1-2)
(d-2-1) edge node{$\lambda_{X, \s Y}$} (d-2-2)
(d-1-1) edge node{$\s \rho_{X,Y}$} (d-2-1)
(d-1-2) edge node{$\rho_{\s X,Y}$} (d-2-2);
\end{tikzpicture}
\end{center}
anticommutes. Using these, one obtains for all integers $m$ and $d \ge 2$, and all complexes $X_1, \dots, X_d$ in $\K^b ( \C )$, a natural isomorphism
\begin{center}
\begin{tikzpicture}
\diagram{d}{3em}{3em}{
 \s^m \left ( X_1 \ot \cdots \ot X_d \right ) & X_1 \ot \cdots \ot \s^m X_i  \ot \cdots \ot X_d \\
 };
\path[->, font = \scriptsize, auto]
(d-1-1) edge node{$\sigma^m_{d,i}$} (d-1-2);
\end{tikzpicture}
\end{center}
for each $1 \le i \le d$. Moreover, given another integer $n$, the diagram
\begin{center}
\begin{tikzpicture}
\diagram{d}{3em}{3em}{
  \s^{m+n} \left ( X_1 \ot \cdots \ot X_d \right ) & \s^n \left ( X_1 \ot \cdots \ot \s^m X_i  \ot \cdots \ot X_d \right ) \\
\s^m \left ( X_1 \ot \cdots \ot \s^n X_j  \ot \cdots \ot X_d \right ) & X_1 \ot \cdots \ot \s^m X_i  \ot \cdots \ot \s^n X_j  \ot \cdots \ot X_d \\
 };
\path[->, font = \scriptsize, auto]
(d-1-1) edge node{$\s^n \sigma^m_{d,i}$} (d-1-2)
(d-2-1) edge node{$\sigma^m_{d,i}$} (d-2-2)
(d-1-1) edge node{$\s^m \sigma^n_{d,j}$} (d-2-1)
(d-1-2) edge node{$\sigma^n_{d,j}$} (d-2-2);
\end{tikzpicture}
\end{center}
anticommutes with a factor $(-1)^{mn}$ whenever $i < j$.

We now construct some specific complexes and chain maps in $\K^b ( \C )$ that we shall use in the proof of the main result. The construction is analogous to that presented in \cite[Section 3]{BC}.

\begin{construction}\label{con:complex}
(1) Let $\zeta$ be a nonzero homogeneous element of $\Coh^*( \C )$, of degree $n \ge 2$, and fix a minimal projective resolution $(P_i, d_i)$ of the unit object $\unit$. This element corresponds to a nonzero morphism $\hat{\zeta} \colon \Omega_{\C}^n ( \unit ) \longrightarrow \unit$, where $\Omega_{\C}^n ( \unit )$ is the $n$th syzygy of $\unit$, that is, the image of the morphism $d_n$. From this morphism we obtain a commutative pushout diagram
\begin{center}
\begin{tikzpicture}
\diagram{d}{2.8em}{2.8em}{
0 & \Omega_{\C}^n ( \unit ) & P_{n-1} & P_{n-2} & \cdots & P_0 & \unit & 0 \\
0 & \unit & K_{\zeta} & P_{n-2} & \cdots & P_0 & \unit & 0 \\
 };
\path[->, font = \scriptsize, auto]
(d-1-1) edge (d-1-2) 
(d-1-2) edge (d-1-3) 
(d-1-3) edge node{$d_{n-1}$} (d-1-4)
(d-1-4) edge node{$d_{n-2}$} (d-1-5)
(d-1-5) edge node{$d_1$} (d-1-6)
(d-1-6) edge node{$d_0$} (d-1-7)
(d-1-7) edge (d-1-8)
(d-2-1) edge (d-2-2) 
(d-2-2) edge node{$\mu$} (d-2-3) 
(d-2-3) edge node{$\rho$} (d-2-4)
(d-2-4) edge node{$d_{n-2}$} (d-2-5)
(d-2-5) edge node{$d_1$} (d-2-6)
(d-2-6) edge node{$d_0$} (d-2-7)
(d-2-7) edge (d-2-8)
(d-1-2) edge node{$\hat{\zeta}$} (d-2-2)
(d-1-3) edge (d-2-3)
(d-1-4) edge node{$1$} (d-2-4)
(d-1-6) edge node{$1$} (d-2-6)
(d-1-7) edge node{$1$} (d-2-7);
\end{tikzpicture}
\end{center}
with exact rows, and in turn a complex
\begin{center}
\begin{tikzpicture}
\diagram{d}{2.8em}{2.8em}{
\cdots & 0 & K_{\zeta} & P_{n-2} & \cdots & P_0 & 0 & \cdots \\
 };
\path[->, font = \scriptsize, auto]
(d-1-1) edge (d-1-2) 
(d-1-2) edge (d-1-3) 
(d-1-3) edge node{$\rho$} (d-1-4) 
(d-1-4) edge node{$d_{n-2}$} (d-1-5) 
(d-1-5) edge node{$d_1$} (d-1-6)
(d-1-6) edge (d-1-7)
(d-1-7) edge (d-1-8);
\end{tikzpicture}
\end{center}
of length $n$, which we denote by $C_{\zeta}$ (if $n=2$, then $d_1 = \rho$ in this complex). 

(2) The composition $\mu \circ d_0$ induces a chain map $\nu \colon \s^{n-1} C_{\zeta} \longrightarrow C_{\zeta}$, displayed as
\begin{center}
\begin{tikzpicture}
\diagram{d}{2.8em}{3.5em}{
\cdots &P_1 & P_0 & 0 & \cdots \\
\cdots & 0 & K_{\zeta} & P_{n-2} & \cdots \\
 };
\path[->, font = \scriptsize, auto]
(d-1-1) edge (d-1-2) 
(d-1-2) edge node{$(-1)^{n-1}d_1$} (d-1-3) 
(d-1-3) edge (d-1-4)
(d-1-4) edge (d-1-5)
(d-2-1) edge (d-2-2) 
(d-2-2) edge (d-2-3) 
(d-2-3) edge node{$\rho$} (d-2-4)
(d-2-4) edge node{$d_{n-2}$} (d-2-5)
(d-1-2) edge (d-2-2)
(d-1-3) edge node{$\mu \circ d_0$} (d-2-3)
(d-1-4) edge (d-2-4);
\end{tikzpicture}
\end{center}
with the following convention if $n=2$: $d_{n-2} = 0$, $d_1 = \rho$, and $P_1 = K_{\zeta}$.
\end{construction}

Recall from Section \ref{sec:prelim} that when $\C$ is a finite tensor category over a field of characteristic $p \neq 2$, then $\Ho ( \C )$ denotes the even part of the cohomology ring $\Coh^*( \C )$, i.e.\ $\Ho ( \C ) = \Coh^{2*}( \C )$. When $p=2$, then $\Ho ( \C )$ denotes just $\Coh^*( \C )$ itself. Moreover, the unique homogeneous maximal ideal of $\Ho ( \C )$ is denoted by $\m_0$. If $\C$ satisfies \textbf{Fg}, and the Krull dimension of $\Ho ( \C )$ is $d$, say, then a sequence $\zeta_1, \dots, \zeta_d$ of nonzero homogeneous elements (of positive degrees) in $\Ho ( \C )$ is called a \emph{homogeneous system of parameters} if the ideal $( \zeta_1, \dots, \zeta_d )$ is $\m_0$-primary, so that its radical is $\m_0$. In the following lemma, we show that for such a sequence, the object $K_{\zeta_1} \ot \cdots \ot K_{\zeta_d}$ is projective in $\C$, where $K_{\zeta_i}$ is the object appearing in the complex $C_{\zeta_i}$ from Construction \ref{con:complex}.

\begin{lemma}\label{lem:projective}
Let $k$ be a field and $\left ( \C, \ot, \unit \right )$ a finite tensor $k$-category. Suppose that $\C$ satisfies \emph{\textbf{Fg}}, and that $\zeta_1, \dots, \zeta_d$ is a homogeneous system of parameters in $\Ho ( \C )$, where $d$ is the Krull dimension of $\Ho ( \C )$. Then the object $K_{\zeta_1} \ot \cdots \ot K_{\zeta_d}$ is projective.
\end{lemma}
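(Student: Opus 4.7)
The plan is to invoke the theory of cohomological support varieties for finite tensor categories satisfying \textbf{Fg}, developed in \cite{BPW1, BPW2}. The projectivity of an object in such a category is detected by the triviality of its support variety, namely $V_{\C}(M) = \{\m_0\}$, so it suffices to show that $V_{\C}\!\left ( K_{\zeta_1} \ot \cdots \ot K_{\zeta_d} \right ) = \{ \m_0 \}$.

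First I would analyze the individual factors $K_{\zeta_i}$. Extracting the bottom row of the pushout diagram in Construction \ref{con:complex}, together with the fact that $\ker ( \rho \colon K_{\zeta_i} \to P_{n_i-2} )$ equals $\Ima ( \mu )$ (identifying $\unit$ with its image), gives a short exact sequence
$$0 \longrightarrow \unit \longrightarrow K_{\zeta_i} \longrightarrow \Omega_{\C}^{n_i - 1}( \unit ) \longrightarrow 0$$
whose Yoneda class is, up to sign and suspension, the element $\zeta_i$ itself. Standard Carlson-module arguments, carried over to the finite tensor category setting (cf.\ \cite{BPW1, BPW2}), then identify the support variety of $K_{\zeta_i}$ as
$$V_{\C}( K_{\zeta_i} ) = \{ \m_0 \} \cup V( \zeta_i )$$
where $V( \zeta_i )$ denotes the set of maximal ideals of $\Ho ( \C )$ containing $\zeta_i$. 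The key input is the long exact sequence in $\Ext$ obtained by applying $\Ext_{\C}^*( -, K_{\zeta_i} )$ to the displayed short exact sequence, together with the fact that multiplication by $\zeta_i$ factors through the connecting map.

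Next I would combine these with the tensor product property of support varieties. In the setting of \textbf{Fg}, one has the containment
$$V_{\C}( M \ot N ) \subseteq V_{\C}(M) \cap V_{\C}(N)$$
for all objects $M,N \in \C$, which is again one of the main results of \cite{BPW1, BPW2}. Iterating this $d-1$ times yields
$$V_{\C}\!\left ( K_{\zeta_1} \ot \cdots \ot K_{\zeta_d} \right ) \subseteq \bigcap_{i=1}^{d} V_{\C}( K_{\zeta_i} ) = \{ \m_0 \} \cup \bigcap_{i=1}^{d} V( \zeta_i ) = \{ \m_0 \} \cup V( \zeta_1, \dots, \zeta_d )$$
where $V( \zeta_1, \dots, \zeta_d )$ is the vanishing locus of the ideal generated by the parameters. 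By assumption this ideal is $\m_0$-primary, so its radical, and hence its vanishing locus in $\Maxspec \Ho ( \C )$, is precisely $\{ \m_0 \}$. We conclude that the support variety of the tensor product is trivial, and thus that $K_{\zeta_1} \ot \cdots \ot K_{\zeta_d}$ is projective.

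The main obstacle is step one: although the identification $V_{\C}( K_{\zeta_i} ) = \{ \m_0 \} \cup V( \zeta_i )$ is the natural analogue of a familiar fact for the Carlson module $L_{\zeta_i}$ over a group algebra, one must check that the pushout-based object $K_{\zeta_i}$ used here differs from $L_{\zeta_i}$ only by a projective summand, and that the tensor product property and the detection-of-projectivity criterion stated in \cite{BPW1, BPW2} apply without additional hypotheses (such as braiding). Both should follow cleanly from the results cited in Section \ref{sec:prelim}, but they are the genuinely non-formal inputs of the argument.
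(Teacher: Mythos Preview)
Your proposal is correct and follows essentially the same route as the paper: compute the support variety of $K_{\zeta_1} \ot \cdots \ot K_{\zeta_d}$, show it reduces to $\{\m_0\}$ because $(\zeta_1,\dots,\zeta_d)$ is $\m_0$-primary, and conclude projectivity via the detection criterion \cite[Corollary 4.3]{BPW1}.

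The one tactical difference worth noting is how the intersection is reached. You compute each $V_{\C}(K_{\zeta_i})$ separately and then invoke the general containment $V_{\C}(M \ot N) \subseteq V_{\C}(M) \cap V_{\C}(N)$. The paper instead establishes the \emph{iterative equality} $V_{\C}(K_{\zeta_i} \ot M) = V_{\C}(M) \cap Z(\zeta_i)$ for every object $M$, by exhibiting the short exact sequence $0 \to L_{\zeta_i} \to P_{n-1} \to K_{\zeta_i} \to 0$ (your ``differs from $L_{\zeta_i}$ only by a projective summand'' is exactly this), tensoring with $M$, and appealing to \cite[Theorem 5.2]{BPW1} for $L_{\zeta_i}$. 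This sidesteps any need for a general tensor product inclusion, which is relevant to your closing worry: the paper's argument uses only results already available in \cite{BPW1} and requires no braiding hypothesis, whereas the full tensor product property is the subject of the separate paper \cite{BPW2}. Your caution there was well placed, and the paper's workaround is precisely the clean resolution.
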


\begin{proof}
As mentioned in Construction \ref{con:complex}, each $\zeta_i$ corresponds to a nonzero morphism $\hat{\zeta}_i \colon \Omega_{\C}^n ( \unit ) \longrightarrow \unit$. This morphism must necessarily be an epimorphism, since $\unit$ is a simple object; we denote its kernel by $L_{\zeta_i}$. By \cite[Theorem 5.2]{BPW1}, the equality $V_{\C} ( L_{\zeta_i} \ot M ) = V_{\C} (M) \cap Z( \zeta_i )$ holds for every object $M \in \C$, where $Z( \zeta_i )$ is the set of all $\m \in \Maxspec \Ho ( \C )$ containing $\zeta_i$. Now since $K_{\zeta_i}$ is the pushout of the epimorphism $\hat{\zeta}_i$ and the monomorphism $\Omega_{\C}^n ( \unit ) \longrightarrow P_{n-1}$, there is an exact commutative diagram
\begin{center}
\begin{tikzpicture}
\diagram{d}{2.5em}{2.5em}{
& 0 & 0 \\
& L_{\zeta_i} & L_{\zeta_i} \\
0 & \Omega_{\C}^n ( \unit ) & P_{n-1} & \Omega_{\C}^{n-1} ( \unit ) & 0 \\
0 & \unit & K_{\zeta_i} & \Omega_{\C}^{n-1} ( \unit ) & 0 \\
& 0 & 0 \\
 };
\path[->, font = \scriptsize, auto]
(d-2-2) edge node{$1$} (d-2-3) 
(d-3-1) edge (d-3-2) 
(d-3-2) edge (d-3-3) 
(d-3-3) edge (d-3-4)
(d-3-4) edge (d-3-5)
(d-4-1) edge (d-4-2) 
(d-4-2) edge node{$\mu_i$}  (d-4-3) 
(d-4-3) edge (d-4-4)
(d-4-4) edge (d-4-5)
(d-1-2) edge (d-2-2) 
(d-2-2) edge (d-3-2) 
(d-3-2) edge node{$\hat{\zeta}_i$} (d-4-2)
(d-4-2) edge (d-5-2)
(d-1-3) edge (d-2-3) 
(d-2-3) edge (d-3-3) 
(d-3-3) edge (d-4-3)
(d-4-3) edge (d-5-3)
(d-3-4) edge node{$1$} (d-4-4);
\end{tikzpicture}
\end{center}
It follows that the object $K_{\zeta_i}$ has the same property with respect to support varieties as the object $L_{\zeta_i}$. For if $M$ is any object, then by applying $- \ot M$ to the right-hand vertical exact sequence, we obtain an exact sequence
\begin{center}
\begin{tikzpicture}
\diagram{d}{3em}{3em}{
0 & L_{\zeta_i} \ot M & P_{n-1} \ot M & K_{\zeta_i} \ot M & 0 \\
 };
\path[->, font = \scriptsize, auto]
(d-1-1) edge (d-1-2)
(d-1-2) edge (d-1-3)
(d-1-3) edge (d-1-4)
(d-1-4) edge (d-1-5);
\end{tikzpicture}
\end{center}
Since the object $P_{n-1} \ot M$ is projective, it follows from \cite[Proposition 3.3(vii)]{BPW1} that the objects $L_{\zeta_i} \ot M$ and $K_{\zeta_i} \ot M$ have the same support varieties, hence $V_{\C} ( K_{\zeta_i} \ot M ) = V_{\C} (M) \cap Z( \zeta_i )$. By repeatedly applying this equality, and using the fact that $V_{\C} ( \unit ) = \Maxspec \Ho ( \C )$, we obtain
\begin{eqnarray*}
V_{\C} \left ( K_{\zeta_1} \ot \cdots \ot K_{\zeta_d} \right ) & = & V_{\C} \left ( K_{\zeta_1} \ot \cdots \ot K_{\zeta_d} \ot \unit \right ) \\
& = & V_{\C} \left ( \unit \right ) \cap Z \left ( \zeta_1 \right ) \cap \cdots \cap Z \left ( \zeta_d \right ) \\
& = & Z \left ( \zeta_1, \dots, \zeta_d \right ) \\
& = & \{ \m_0 \}
\end{eqnarray*}
since the radical of the ideal $( \zeta_1, \dots, \zeta_d )$ is $\m_0$. Consequently, the object $K_{\zeta_1} \ot \cdots \ot K_{\zeta_d}$ is projective, by \cite[Corollary 4.3]{BPW1}.
\end{proof}

We now prove the main result: when \textbf{Fg} holds and the Krull dimension of the cohomology ring is sufficiently large, then there exist infinitely many non-isomorphic and nontrivial bounded complexes of projective objects, and with small homology. By an \emph{additive function} on $\C$ we mean a group homomorphism from the Grothendieck group of $\C$ into the non-negative real numbers. Equivalently, such a function $f$ assigns to each object $M \in \C$ a real number $f(M) \ge 0$, with $f(M) = f(L) + f(N)$ whenever there exists a short exact sequence
\begin{center}
\begin{tikzpicture}
\diagram{d}{3em}{3em}{
0 & L & M & N & 0 \\
 };
\path[->, font = \scriptsize, auto]
(d-1-1) edge (d-1-2)
(d-1-2) edge (d-1-3)
(d-1-3) edge (d-1-4)
(d-1-4) edge (d-1-5);
\end{tikzpicture}
\end{center}
in $\C$. We call $f$ \emph{nontrivial} if $f( \unit ) \neq 0$.

\begin{theorem}\label{thm:main}
Let $k$ be a field of characteristic not $2$, and $\left ( \C, \ot, \unit \right )$ a finite tensor $k$-category. Suppose that $\C$ satisfies \emph{\textbf{Fg}}, and that the Krull dimension $d$ of $\Ho ( \C )$ is at least $8$. Then given any nontrivial additive function $f$ on $\C$, there exist infinitely many non-isomorphic and nontrivial bounded complexes $D \in \K^b ( \C )$ of projective objects, with 
$$\frac{1}{f ( \unit )} \sum_{i \in \mathbb{Z}} f ( \Homol_i(D) ) < 2^d$$
\end{theorem}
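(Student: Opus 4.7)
The plan is to adapt to the tensor-categorical setting the construction of Iyengar--Walker \cite{IW} as generalized by Carlson \cite{Ca}, following the blueprint of \cite{BC}. By \textbf{Fg} and the Krull dimension hypothesis, fix a homogeneous system of parameters $\zeta_1, \dots, \zeta_d$ in $\Ho(\C)$, and write $n_i = \deg \zeta_i$. For each $i$, Construction \ref{con:complex} supplies a bounded complex $C_{\zeta_i}$ whose homology is $\unit$ concentrated in degrees $0$ and $n_i$, together with a chain map $\nu_i \colon \s^{n_i-1} C_{\zeta_i} \longrightarrow C_{\zeta_i}$ that realises multiplication by $\zeta_i$.

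Next I would form the tensor product complex $T = C_{\zeta_1} \ot \cdots \ot C_{\zeta_d}$. Every term of $T$ is a direct sum of tensor products $X_1 \ot \cdots \ot X_d$ in which each $X_i$ is either one of the projectives $P_{j_i}$ appearing in $C_{\zeta_i}$ or the object $K_{\zeta_i}$. If even one $X_i$ is a projective $P_{j_i}$ then the whole tensor product is projective, since projectives form a two-sided ideal in $\C$ by Remark \ref{rem:properties}(1); the sole remaining term is $K_{\zeta_1} \ot \cdots \ot K_{\zeta_d}$, which is projective by Lemma \ref{lem:projective}. Hence $T$ is a bounded complex of projectives. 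Since $\ot$ is biexact, a K\"unneth-type induction identifies $H_\ast(T)$ as a graded object whose total $f$-weight is $2^d f(\unit)$, with one copy of $\unit$ contributed by each of the $2^d$ corners $(\epsilon_1, \dots, \epsilon_d) \in \{0,1\}^d$ of the cube (at total homological degree $\sum \epsilon_i n_i$). Using the sign-coherent isomorphisms $\sigma^m_{d,i}$, each $\nu_i$ can be transported through the remaining factors to yield graded-commuting self-maps $\tilde\nu_i \colon \s^{n_i-1} T \longrightarrow T$.

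I would then take $D$ to be the mapping cone of a suitable linear combination $\tilde\nu = \sum a_i \tilde\nu_i$ of these Lefschetz maps (possibly iterated), which is again a bounded complex of projectives. The hypothesis $d \geq 8$ enters at exactly the same point as in \cite{IW}: it guarantees the existence of scalars $a_i$ for which $\tilde\nu$ acts as a Lefschetz element on the polynomial-ring quotient in $\Ho(\C)$ that describes $H_\ast(T)$, and so the cone strictly decreases the total $f$-weight below $2^d f(\unit)$. Infinitely many non-isomorphic such $D$ are obtained by varying the input, most cheaply by replacing $\zeta_i$ by $\zeta_i^r$ for each $r \geq 1$ (so $n_i$ becomes $r n_i$), or by shifting the Lefschetz coefficients; pairwise non-isomorphism follows either from the distinct lengths of the resulting complexes or from the support varieties of their homology objects, which retain a record of the parameter $r$.

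The main technical obstacle is the sign bookkeeping in the non-symmetric triangulated tensor category $\K^b(\C)$: one must invoke the anticommuting coherence diagrams for the $\sigma^m_{d,i}$ repeatedly to verify that the lifted maps $\tilde\nu_i$ really do graded-commute on the nose, and that the K\"unneth description of $H_\ast(T)$ is valid with the correct signs. Once this analogue of \cite[Section 3]{BC} is in place, the remaining Lefschetz argument is formally identical to the one in \cite{Ca} and \cite{IW}, since it happens inside a commutative polynomial-ring quotient in $\Ho(\C)$.
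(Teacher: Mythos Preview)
Your architecture is right --- choose a homogeneous system of parameters, tensor the complexes $C_{\zeta_i}$, invoke Lemma \ref{lem:projective} for the top term, transport the $\nu_i$ to self-maps of the big complex, take a cone, and vary by powers --- and this is exactly the paper's outline. But there is a genuine gap at the heart of the argument, namely the identification of the Lefschetz element.

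First, you allow the $\zeta_i$ to have distinct degrees $n_i$. The paper immediately replaces the $\zeta_i$ by suitable powers so that all have the \emph{same} even degree $n$; otherwise the lifted maps $\tilde\nu_i \colon \Sigma^{n_i-1}T \to T$ live in different graded pieces of $\Hom^*_{\K^b(\C)}(T,T)$ and cannot be combined at all. Second, and more seriously, the map whose cone you take cannot be a linear combination $\sum a_i \tilde\nu_i$. Each $\tilde\nu_i$ squares to zero and (this is the sign bookkeeping you mention) the $\tilde\nu_i$ anti-commute, so they generate an \emph{exterior} algebra $\Lambda$ on $d$ generators inside $\Hom^*_{\K^b(\C)}(T,T)$ --- not a polynomial-ring quotient, and not in $\Ho(\C)$. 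A degree-one element $\sum a_i \tilde\nu_i$ in $\Lambda$ has kernel and cokernel each of total dimension $2^{d-1}$, so its cone still has total $f$-weight $2^d f(\unit)$: no improvement. What is needed is the degree-two element
\[
w = \tilde\nu_1\tilde\nu_2 + \tilde\nu_3\tilde\nu_4 + \tilde\nu_5\tilde\nu_6 + \tilde\nu_7\tilde\nu_8
\]
in the exterior subalgebra on the first eight generators; hard Lefschetz for this element (this is \cite[Proposition A.2]{CHI}, and here is where the hypothesis $\operatorname{char} k \neq 2$ is actually used) forces multiplication by $w$ to be injective in low degrees and surjective in high degrees on $\Homol_*(C_{\zeta_1}\ot\cdots\ot C_{\zeta_8})$. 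Only then does the cone $D'$ have $\sum f(\Homol_i(D')) = (2^8-4)f(\unit)$, and tensoring with the remaining $C_{\zeta_9}\ot\cdots\ot C_{\zeta_d}$ gives $(2^8-4)2^{d-8}f(\unit) < 2^d f(\unit)$.

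A minor slip: the homology of $C_{\zeta_i}$ sits in degrees $0$ and $n_i-1$, not $0$ and $n_i$.
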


\begin{proof}
Given any positive real number $\alpha$, the function $\alpha f$ is also nontrivial and additive on $\C$. We may therefore suppose that $f( \unit ) = 1$.

By the graded version of the Noether Normalization Lemma (cf.\ \cite[Theorem 1.5.17]{BH}), there exists a homogeneous system of parameters $\zeta_1, \dots, \zeta_d$ in $\Ho ( \C )$. Given any sequence $n_1, \dots, n_d$ of positive integers, the ideal $( \zeta_1^{n_1}, \dots, \zeta_d^{n_d} )$ is also $\m_0$-primary, so we may assume without loss of generality that the elements $\zeta_1, \dots, \zeta_d$ are all of the same (even) degree in $\Ho ( \C )$, say $n$.

For each $1 \le i \le d$, consider the complex $C_{\zeta_i}$ of length $n$, together with the chain map $\nu_i \colon \s^{n-1} C_{\zeta_i} \longrightarrow C_{\zeta_i}$, from Construction \ref{con:complex}. Since $\mu_i$ is a monomorphism and $d_0$ is nonzero, the composition $\mu_i \circ d_0$ is nonzero, and so the chain map $\nu_i$ is not null-homotopic. Therefore (the homotopy equivalence class of) $\nu_i$ is a nonzero morphism in $\Hom_{\K^b ( \C )}( \s^{n-1} C_{\zeta_i}, C_{\zeta_i} )$, with $\nu_i^2 = 0$ in $\Hom_{\K^b ( \C )}^*( C_{\zeta_i}, C_{\zeta_i} )$. Moreover, note that the homology of $C_{\zeta_i}$ is nonzero in precisely two degrees: $\Homol_0(C_{\zeta_i}) \simeq \Homol_{n-1}(C_{\zeta_i}) \simeq \unit$. Consequently, we see that $\nu_i$ induces an isomorphism $\bar{\nu}_i \colon \Homol_0(C_{\zeta_i}) \longrightarrow \Homol_{n-1}(C_{\zeta_i})$.

To make the notation in the rest of the proof a bit easier, we set $m = n-1$; thus $m$ is an odd integer. Consider now the complex $C = C_{\zeta_1} \ot \cdots \ot C_{\zeta_d}$ in $\K^b ( \C )$ of length $dm+1$, with nonzero objects in degrees $0, 1, \dots, dm$. In degrees $0,1, \dots, dm-1$, the objects are projective, since they are direct sums of tensor products in which at least one of the $P_j$ appears as a factor (recall that the projective objects of $\C$ form a two-sided ideal). The object in degree $dm$ is the tensor product $K_{\zeta_1} \ot \cdots \ot K_{\zeta_d}$, which also is projective, as we showed in Lemma \ref{lem:projective}. Thus $C$ is a complex of projective objects. Now for each $i$, the chain map $\nu_i$ induces a chain map $\theta_i \colon \s^m C \longrightarrow C$, given by the composition
\begin{center}
\begin{tikzpicture}
\diagram{d}{3em}{3em}{
\s^m C & C_{\zeta_1} \ot \cdots \ot \s^m C_{\zeta_i} \ot \cdots \ot C_{\zeta_d} & C \\
 };
\path[->, font = \scriptsize, auto]
(d-1-1) edge node{$\sigma^m_{d,i}$} (d-1-2)
(d-1-2) edge node{$\hat{\nu}_i$} (d-1-3);
\end{tikzpicture}
\end{center}
where $\sigma^m_{d,i}$ is the natural isomorphism, and $\hat{\nu}_i = 1 \ot \cdots \ot \nu_i \ot \cdots \ot 1$. Since $\nu_i$ squares to zero in $\Hom_{\K^b ( \C )}^*( C_{\zeta_i}, C_{\zeta_i} )$, we see that $\theta_i^2 =0$ in $\Hom_{\K^b ( \C )}^*( C, C )$. Moreover, when $i < j$ we obtain
\begin{eqnarray*}
\theta_i \cdot \theta_j & = & \theta_i \circ \s^m \left ( \theta_j \right ) \\
& = & \hat{\nu}_i \circ \sigma^m_{d,i} \circ \s^m \hat{\nu}_j \circ \s^m \sigma^m_{d,j} \\
& = & \hat{\nu}_i \circ \hat{\nu}_j \circ \sigma^m_{d,i} \circ \s^m \sigma^m_{d,j} \\
& = & (-1)^{m^2} \hat{\nu}_j \circ \hat{\nu}_i \circ \sigma^m_{d,j} \circ \s^m \sigma^m_{d,i} \\
& = & - \hat{\nu}_j \circ \sigma^m_{d,j} \circ \s^m \hat{\nu}_i \circ  \s^m \sigma^m_{d,i} \\
& = & - \theta_j \cdot \theta_i
\end{eqnarray*}
Here we have used the fact that $\sigma^m_{d,i}$ and $\sigma^m_{d,j}$ are natural, so that they commute with $\hat{\nu}_j$ and $\hat{\nu}_i$. Moreover, we have used that $ \sigma^m_{d,i} \circ \s^m \sigma^m_{d,j} = (-1)^{m^2} \sigma^m_{d,j} \circ \s^m \sigma^m_{d,i}$ -- as explained in the beginning of this section -- and the fact that $m$ is odd. Finally, we have used that $\hat{\nu}_i$ and $\hat{\nu}_j$ commute.

The above shows that the chain maps $\theta_1, \dots, \theta_d$ generate a $d$-fold exterior algebra $\Lambda \subseteq \Hom_{\K^b ( \C )}^*( C, C )$, concentrated in degrees $0,m,2m, \dots, dm$. How do they act on the homology of $C$? Since each $C_{\zeta_i}$ has only two nonzero homology objects $\Homol_0(C_{\zeta_i})$ and $\Homol_m(C_{\zeta_i})$, and the tensor product is exact in each variable, it follows from the K{\"u}nneth Theorem that $C$ has homology only in degrees $0,m,2m, \dots, dm$, with
$$\Homol_{tm} \left ( C \right ) \simeq \bigoplus_{i_1+ \cdots + i_d = t} \Homol_{mi_1} \left ( C_{\zeta_1} \right ) \ot \cdots \ot \Homol_{mi_d} \left ( C_{\zeta_d} \right )$$
Moreover, since $\Homol_0(C_{\zeta_i}) \simeq \Homol_m(C_{\zeta_i}) \simeq \unit$, we see that $\Homol_{tm}(C)$ is the direct sum of ${d}\choose{t}$
copies of $\unit$; let us denote these by $\unit_{i_1, \dots, i_d}$, with $t$ of the $i_1, \dots, i_d$ equal to $m$, and the other ones equal to zero. Thus
\begin{eqnarray*}
\Homol_0 \left ( C \right ) & \simeq & \unit_{0, \dots, 0} \\
\Homol_m \left ( C \right ) & \simeq & \unit_{m,0, \dots, 0} \oplus \unit_{0,m, \dots, 0} \oplus \cdots \oplus \unit_{0, \dots, 0,m} \\
\Homol_{2m} \left ( C \right ) & \simeq & \unit_{m,m,0, \dots, 0} \oplus \unit_{m,0,m,0 \dots, 0} \oplus \cdots \oplus \unit_{0, \dots, 0,m,m} \\
& \vdots & \\
\Homol_{dm} \left ( C \right ) & \simeq & \unit_{m, \dots, m}
\end{eqnarray*}
Moreover, since for each $j$ the chain map $\nu_j \colon \s^m C_{\zeta_j} \longrightarrow C_{\zeta_j}$ induces an isomorphism $\bar{\nu}_j \colon \Homol_0(C_{\zeta_j}) \longrightarrow \Homol_m(C_{\zeta_j})$, we see that the action of the chain map $\theta_j \colon \s^m C \longrightarrow C$ on $\Homol_*(C)$
is given by
\begin{center}
\begin{tikzpicture}
\diagram{d}{3em}{3em}{
\unit_{i_1, \dots, i_d} & \unit_{i_1, \dots, i_j+m, \dots, i_d}  \\
 };
\path[->, font = \scriptsize, auto]
(d-1-1) edge (d-1-2);
\end{tikzpicture}
\end{center}
with the convention that $\unit_{i_1, \dots, i_j+m, \dots, i_d} =0$ if $i_j =m$. In other words, the induced morphism $\bar{\theta}_j \colon \Homol_{tm}(C) \longrightarrow \Homol_{(t+1)m}(C)$ maps $\unit_{i_1, \dots, i_d}$ isomorphically to $\unit_{i_1, \dots, i_j+m, \dots, i_d}$ if $i_j =0$, and to zero if $i_j =m$. Thus $\Homol_{tm}(C)$ is the direct sum of $\bar{\theta}_{i_1} \cdots \bar{\theta}_{i_t}  \unit_{0, \dots, 0}$, the sum being taken over all $1 \le i_1 < \cdots < i_t \le d$. Since $f$ is an additive function with $f( \unit ) =1$, we see that $f \left ( \Homol_{tm}(C) \right ) =  {{d}\choose{t}}$. 

\sloppy We now adapt the techniques from \cite[Section 2]{IW}. By assumption $d \ge 8$, so let $\Lambda'$ be the exterior subalgebra of $\Lambda$ generated by $\theta_1, \dots, \theta_8$, and $w \in \Lambda'$ the Lefschetz element $\theta_1 \cdot \theta_2 + \theta_3 \cdot \theta_4 + \theta_5 \cdot \theta_6 + \theta_7 \cdot \theta_8$. Moreover, let $C' = C_{\zeta_1} \ot \cdots \ot C_{\zeta_8}$, and $C'' = C_{\zeta_9} \ot \cdots \ot C_{\zeta_d}$, with the convention that $C''$ is the stalk complex on the unit object $\unit$ if $d=8$. By \cite[Proposition A.2]{CHI}, the multiplication map
\begin{center}
\begin{tikzpicture}
\diagram{d}{3em}{3em}{
\Lambda'_i & \Lambda'_{i+2m} \\
 };
\path[->, font = \scriptsize, auto]
(d-1-1) edge node{$w$} (d-1-2);
\end{tikzpicture}
\end{center}
is injective for $i \in \{ 0,m,2m,3m \}$, and surjective for $i \in \{ 3m,4m,5m,6m \}$. We claim that in the same way, the chain map $w \colon \s^{2m} C' \longrightarrow C'$ induces a morphism
\begin{center}
\begin{tikzpicture}
\diagram{d}{3em}{3em}{
\Homol_i \left ( C' \right ) & \Homol_{i+2m} \left ( C' \right )  \\
 };
\path[->, font = \scriptsize, auto]
(d-1-1) edge node{$\bar{w}$} (d-1-2);
\end{tikzpicture}
\end{center}
which is a monomorphism for $i \in \{ 0,m,2m,3m \}$, and an epimorphism for $i \in \{ 3m,4m,5m,6m \}$. To see this, we use the above interpretation of $\Homol_*(C)$ -- which applies to $\Homol_*(C')$ as well -- together with the fact that as a $k$-linear abelian category, $\C$ is equivalent to the category of finitely generated left modules over some finite dimensional algebra; cf.\ \cite[pages 9--10]{EGNO}. 

The epimorphism claim follows from the interpretation of $\Homol_{tm}(C')$ as a direct sum of $\bar{\theta}_{i_1} \cdots \bar{\theta}_{i_t}  \unit_{0, \dots, 0}$, the sum being taken over all $1 \le i_1 < \cdots < i_t \le 8$. Namely, since $w$ induces a surjective map from $\Lambda'_i$ to $ \Lambda'_{i+2m}$ for $i \in \{ 3m,4m,5m,6m \}$, the induced morphism $\bar{w}$ must map $\Homol_{i}(C')$ surjectively onto $\Homol_{i+2m}(C')$ for the same $i$. Note that for $i = 3m$, the map must be an isomorphism, since 
$$\dim_k \Homol_{3m}(C') = {{8}\choose{3}} \dim_k \unit = {{8}\choose{5}} \dim_k \unit = \dim_k \Homol_{5m}(C')$$
(here we have passed to modules over some finite dimensional algebra, so $C'$ and $\unit$ are images under an equivalence of categories). 

For the monomorphism claim, we treat the three cases $i = 0,m,2m$ separately. First, suppose that $x \in \unit_{0, \dots, 0}$ with $0 = \bar{w}(x) = \bar{\theta}_1 \circ \bar{\theta}_2(x) + \cdots + \bar{\theta}_7 \circ \bar{\theta}_8(x)$. Each $\bar{\theta}_s \circ \bar{\theta}_{s+1}(x)$ belongs to the summand $\unit_{0, \dots, s,s+1, \dots, 0}$ of $\Homol_{2m}(C')$, and since $\bar{\theta}_s \circ \bar{\theta}_{s+1}$ maps $\unit_{0, \dots, 0}$ isomorphically onto this summand, we see that $x=0$. Similarly, if $x_1, \dots, x_8$ are elements of $\unit_{0, \dots, 0}$ such that the element $\bar{\theta}_1(x_1) +  \cdots + \bar{\theta}_8(x_8) \in \Homol_{m}(C')$ belongs to the kernel of $\bar{w}$, then it is not hard to see from the decomposition $\Homol_{3m}(C') = \oplus \bar{\theta}_{i_1} \bar{\theta}_{i_2} \bar{\theta}_{i_3} \unit_{0, \dots, 0}$ that $x_1 = \cdots = x_8 =0$. Hence for $i = 0$ and $i=m$, the monomorphism claim follows. Finally, for $i = 2m$, consider the decomposition of $\Homol_{2m}(C')$ as the direct sum of all the $28$ summands $\bar{\theta}_{r} \bar{\theta}_{s} \unit_{0, \dots, 0}$ for $1 \le r < s \le 8$. For all such $r,s$, let $x_{r,s} \in \unit_{0, \dots, 0}$, and suppose that the element $\sum \bar{\theta}_{r} \circ \bar{\theta}_{s} ( x_{r,s} ) \in \Homol_{2m}(C')$ belongs to the kernel of $\bar{w}$. If $s \neq r+1$, then pick a map $\bar{\theta}_u \bar{\theta}_{u+1} \in \{ \bar{\theta}_1 \bar{\theta}_2, \bar{\theta}_3 \bar{\theta}_4, \bar{\theta}_5 \bar{\theta}_6, \bar{\theta}_7 \bar{\theta}_8 \}$ such that the indices $r,s,u,u+1$ are different. Then $\bar{\theta}_u \circ \bar{\theta}_{u+1} \circ \bar{\theta}_{r} \circ \bar{\theta}_{s} ( x_{r,s} )$ belongs to the summand $\bar{\theta}_u \bar{\theta}_{u+1} \bar{\theta}_{r} \bar{\theta}_{s} \unit_{0, \dots, 0}$ of $\Homol_{4m}(C')$, and it is the only term from $\bar{w} \left ( \sum \bar{\theta}_{r} \circ \bar{\theta}_{s} ( x_{r,s} ) \right )$ that belongs to this summand (note that the indices of the summand $\bar{\theta}_u \bar{\theta}_{u+1} \bar{\theta}_{r} \bar{\theta}_{s} \unit_{0, \dots, 0}$ are not necessarily written in the correct ascending order). Consequently, we see that $x_{r,s} =0$ for $s \neq r+1$. The same reasoning works for the elements $x_{2,3}, x_{4,5}, x_{6,7}$. For example, the element $\bar{\theta}_1 \circ \bar{\theta}_2 \circ \bar{\theta}_6 \circ \bar{\theta}_7 ( x_{6,7} )$ is the only term from $\bar{w} \left ( \sum \bar{\theta}_{r} \circ \bar{\theta}_{s} ( x_{r,s} ) \right )$ that belongs to the summand $\bar{\theta}_1 \bar{\theta}_2 \bar{\theta}_6 \bar{\theta}_7 \unit_{0, \dots, 0}$. For the remaining elements $x_{1,2}, x_{3,4}, x_{5,6}, x_{7,8}$, note that the three elements 
$$\bar{\theta}_1 \circ \bar{\theta}_2 \circ \bar{\theta}_3 \circ \bar{\theta}_4 ( x_{1,2} + x_{3,4} ), \bar{\theta}_3 \circ \bar{\theta}_4 \circ \bar{\theta}_5 \circ \bar{\theta}_6 ( x_{3,4} + x_{5,6} ), \bar{\theta}_5 \circ \bar{\theta}_6 \circ \bar{\theta}_1 \circ \bar{\theta}_2 ( x_{5,6} + x_{1,2} )$$
are the only terms from $\bar{w} \left ( \sum \bar{\theta}_{r} \circ \bar{\theta}_{s} ( x_{r,s} ) \right )$ that belong to the corresponding three summands of of $\Homol_{4m}(C')$. Therefore
$$x_{1,2} + x_{3,4} = x_{3,4} + x_{5,6} = x_{5,6} + x_{1,2} =0$$
and so since the characteristic of $k$ is not $2$ we obtain $x_{1,2} = x_{3,4} = x_{5,6} =0$. Finally, this also forces $x_{7,8}$ to be zero. This proves the monomorphism claim for the last case $i=2m$.

Now let $D'$ be the mapping cone of the chain map $w$, and consider the corresponding long exact sequence
\begin{center}
\begin{tikzpicture}
\diagram{d}{3em}{3em}{
\cdots &\Homol_i \left ( C' \right )  & \Homol_i \left ( D' \right ) & \Homol_{i-2m-1} \left ( C' \right ) & \Homol_{i-1} \left ( C' \right ) & \cdots \\
 };
\path[->, font = \scriptsize, auto]
(d-1-1) edge node{$\bar{w}$} (d-1-2)
(d-1-2) edge (d-1-3)
(d-1-3) edge (d-1-4)
(d-1-4) edge node{$\bar{w}$} (d-1-5)
(d-1-5) edge (d-1-6);
\end{tikzpicture}
\end{center}
in homology. From the fact that $\Homol_*(C')$ is nonzero only in degrees $0,m,2m, \dots, 8m$, we obtain isomorphisms
\begin{eqnarray*}
\Homol_0 \left ( D' \right ) & \simeq & \Homol_0 \left ( C' \right ) \\
\Homol_m \left ( D' \right ) & \simeq & \Homol_m \left ( C' \right ) \\
\Homol_{9m+1} \left ( D' \right ) & \simeq & \Homol_{7m} \left ( C' \right ) \\
\Homol_{10m+1} \left ( D' \right ) & \simeq & \Homol_{8m} \left ( C' \right )
\end{eqnarray*}
and short exact sequences
\begin{center}
\begin{tikzpicture}
\diagram{d}{3em}{3em}{
0 & \Homol_{im} \left ( C' \right ) &  \Homol_{(i+2)m} \left ( C' \right ) &  \Homol_{(i+2)m} \left ( D' \right ) & 0 \\
0 & \Homol_{(6+i)m+1} \left ( D' \right ) &  \Homol_{(4+i)m} \left ( C' \right ) &  \Homol_{(6+i)m} \left ( C' \right ) & 0 \\
 };
\path[->, font = \scriptsize, auto]
(d-1-1) edge (d-1-2)
(d-1-2) edge node{$\bar{w}$} (d-1-3)
(d-1-3) edge (d-1-4)
(d-1-4) edge (d-1-5)
(d-2-1) edge (d-2-2)
(d-2-2) edge (d-2-3)
(d-2-3) edge node{$\bar{w}$} (d-2-4)
(d-2-4) edge (d-2-5);
\end{tikzpicture}
\end{center}
for $i \in \{ 0,1,2 \}$. All the other homology objects of $D'$ are zero. By combining all of this with the fact that $f \left ( \Homol_{im}(C') \right ) =  {{8}\choose{i}}$, we can compute 
$$\sum_{i \in \mathbb{Z}} f \left ( \Homol_i(D') \right ) = {{8}\choose{3}} + {{8}\choose{4}} + {{8}\choose{4}} + {{8}\choose{5}} = 2^8 -4$$

Since $\K^b ( \C )$ is a triangulated tensor category with an exact tensor product, we can apply $- \ot C''$ to the distinguished triangle
\begin{center}
\begin{tikzpicture}
\diagram{d}{3em}{3em}{
\s^{2m} C' & C' & D' & \s^{2m+1} C' \\
 };
\path[->, font = \scriptsize, auto]
(d-1-1) edge node{$w$} (d-1-2)
(d-1-2) edge (d-1-3)
(d-1-3) edge (d-1-4);
\end{tikzpicture}
\end{center}
and obtain a distinguished triangle
\begin{center}
\begin{tikzpicture}
\diagram{d}{3em}{3em}{
\s^{2m}C & C & D' \ot C'' & \s^{2m+1} C \\
 };
\path[->, font = \scriptsize, auto]
(d-1-1) edge node{$w$} (d-1-2)
(d-1-2) edge (d-1-3)
(d-1-3) edge (d-1-4);
\end{tikzpicture}
\end{center}
where we view $w$ as a chain map $\s^{2m}C \longrightarrow C$ via the inclusion $\Lambda' \subseteq \Lambda$. Let $D$ be the mapping cone of this chain map. Since all the objects in $C$ are projective, so are all the objects in $D$. Moreover, it is homotopy equivalent to the complex $D' \ot C''$, giving
\begin{eqnarray*}
\sum_{i \in \mathbb{Z}} f \left ( \Homol_i(D) \right ) & = & \sum_{i \in \mathbb{Z}} f \left ( \Homol_i(D' \ot C'') \right ) \\
& = & \sum_{i \in \mathbb{Z}} f \left ( \bigoplus_{s+t=i} \Homol_s(D') \ot \Homol_t(C'') \right ) \\
& = & \sum_{i \in \mathbb{Z}} f \left (  \bigoplus_{t=0}^{d-8} \Homol_{i-tm}(D') \ot \Homol_{tm}(C'') \right ) \\
& = & \sum_{i \in \mathbb{Z}} \left ( \sum_{t=0}^{d-8} {{d-8}\choose{t}} f \left ( \Homol_{i-tm}(D') \right ) \right ) \\
& = & \left (  \sum_{i \in \mathbb{Z}} f \left ( \Homol_i(D') \right ) \right ) \left ( \sum_{t=0}^{d-8} {{d-8}\choose{t}} \right ) \\
& = & \left ( 2^8 -4 \right ) 2^{d-8} \\
& < & 2^d
\end{eqnarray*}
Here we have used the K{\"u}nneth Theorem, the fact that the homology of the complex $C''$ is concentrated in degrees $0,m,2m, \dots, (d-8)m$, and the fact that $\Homol_{tm}(C'')$ is the direct sum of ${{d-8}\choose{t}}$ copies of the unit object $\unit$.

We have now constructed one complex $D$, as the mapping cone of the chain map $\s^{2(n-1)}C \longrightarrow C$ given by $w$, where $n$ was the common even degree of the homogeneous elements $\zeta_1, \dots, \zeta_d$ in $\Ho ( \C )$. The length of the complex $C$ is $d(n-1)+1$; it has nonzero objects in degrees $0,1, \dots, d(n-1)$. The complex $D$ therefore has nonzero objects in degrees $0,1, \dots, (d+2)(n-1)+1$, and therefore length $(d+2)(n-1)+2$. Now take any positive integer $s$. If we go back to the beginning, and replace the elements $\zeta_1, \dots, \zeta_d$ by $\zeta_1^s, \dots, \zeta_d^s$, we obtain a complex $D^s$ of length $(d+2)(sn-1)+2$. Thus for each $s \ge 1$, we obtain a complex $D^s$ of projective objects, with $f \left ( \sum \Homol_i(D^s) \right ) < 2^d$. Moreover, when $s \neq t$ then the complexes $D^s$ and $D^t$ are not isomorphic.
\end{proof}

The length function $\ell$ is additive on $\C$, with $\ell ( \unit ) = 1$, so the theorem applies to this function. However, there is another additive function of great importance as well. Let $S_1, \dots, S_t$ be a complete set of representatives of the isomorphism classes of simple objects in $\C$; there are only finitely many by assumption. Since the unit object is simple, we can assume that $S_1 = \unit$, but it makes no difference in what follows. Given an object $M \in \C$, we denote by $[M:S_i]$ the multiplicity of the simple object $S_i$ in a Jordan-H{\"o}lder series of $M$; this is well defined since the multiplicity is the same in all such series. Given a simple object $S$, consider the matrix of left multiplication by $S$, that is, the $t \times t$-matrix whose $ij$-entry is $[S \ot S_i : S_j]$. This matrix has non-negative integer entries, and therefore a non-negative real eigenvalue by the Frobenius-Perron Theorem. The maximal real eigenvalue is the \emph{Frobenius-Perron dimension} of $S$, and denoted by $\FPdim_{\C}(S)$. It is at least $1$. For details, we refer to \cite[Chapter 3]{EGNO}

One extends the definition of the Frobenius-Perron dimension to arbitrary objects by additivity. Namely, given $M \in \C$, we set
$$\FPdim_{\C}(M) = \sum_{i=1}^t [M : S_i] \FPdim_{\C}(S_i)$$
We see immediately that $\FPdim_{\C}(M) \ge 1$ whenever $M$ is a nonzero object, since the inequality holds for all the simple objects. Also, we see that $\FPdim_{\C}(M) \ge \ell (M)$. Moreover, given a short exact sequence
\begin{center}
\begin{tikzpicture}
\diagram{d}{3em}{3em}{
0 & L & M & N & 0 \\
 };
\path[->, font = \scriptsize, auto]
(d-1-1) edge (d-1-2)
(d-1-2) edge (d-1-3)
(d-1-3) edge (d-1-4)
(d-1-4) edge (d-1-5);
\end{tikzpicture}
\end{center}
of objects, there is an equality of multiplicities $[M : S] = [L : S] + [N : S]$ for every simple object $S$, and so
$$\FPdim_{\C}(M) = \FPdim_{\C}(L) + \FPdim_{\C}(N)$$
that is, the Frobenius-Perron dimension is additive in $\C$. Finally, since the matrix of left multiplication by the unit object $\unit$ is just $[S_i : S_j]$, that is, the $t \times t$-identity matrix, we see that $\FPdim_{\C}( \unit ) = 1$.

\begin{corollary}\label{cor:mainFPdim}
Let $k$ be a field of characteristic not $2$, and $\left ( \C, \ot, \unit \right )$ a finite tensor $k$-category. Suppose that $\C$ satisfies \emph{\textbf{Fg}}, and that the Krull dimension $d$ of $\Ho ( \C )$ is at least $8$. Then there exist infinitely many non-isomorphic and nontrivial bounded complexes $D \in \K^b ( \C )$ of projective objects, with 
$$\sum_{i \in \mathbb{Z}} \ell ( \Homol_i(D) ) \le \sum_{i \in \mathbb{Z}} \FPdim_{\C} ( \Homol_i(D) ) < 2^d$$
\end{corollary}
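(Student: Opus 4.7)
The plan is to derive the corollary as a direct application of Theorem~\ref{thm:main} to the specific additive function $f = \FPdim_{\C}$, and then to read off the inequality with $\ell$ as a free consequence of the pointwise comparison $\ell(M) \le \FPdim_{\C}(M)$.

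First, I would verify that the Frobenius-Perron dimension qualifies as a nontrivial additive function on $\C$ in the sense required by Theorem~\ref{thm:main}. The discussion immediately preceding the corollary establishes exactly this: for every short exact sequence $0 \to L \to M \to N \to 0$ in $\C$ and every simple $S$, the multiplicity satisfies $[M:S] = [L:S] + [N:S]$, hence by the defining formula $\FPdim_{\C}(M) = \sum_i [M:S_i]\FPdim_{\C}(S_i)$ one obtains $\FPdim_{\C}(M) = \FPdim_{\C}(L) + \FPdim_{\C}(N)$. Moreover $\FPdim_{\C}(\unit) = 1$ (the identity-matrix computation), so $\FPdim_{\C}$ is nontrivial. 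The values are non-negative real numbers by construction.

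Given this, applying Theorem~\ref{thm:main} with $f = \FPdim_{\C}$ produces infinitely many non-isomorphic and nontrivial bounded complexes $D \in \K^b(\C)$ of projective objects such that
$$\sum_{i \in \mathbb{Z}} \FPdim_{\C}(\Homol_i(D)) < 2^d \cdot \FPdim_{\C}(\unit) = 2^d.$$
This is the right-hand inequality of the corollary.

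For the left-hand inequality, I would simply observe that for any object $M \in \C$, writing a Jordan-H{\"o}lder series gives $\ell(M) = \sum_i [M:S_i]$, whereas $\FPdim_{\C}(M) = \sum_i [M:S_i]\FPdim_{\C}(S_i)$. Since $\FPdim_{\C}(S_i) \ge 1$ for every simple $S_i$ (again from the discussion preceding the corollary), we get $\ell(M) \le \FPdim_{\C}(M)$ termwise. Summing over $i \in \mathbb{Z}$ yields $\sum_i \ell(\Homol_i(D)) \le \sum_i \FPdim_{\C}(\Homol_i(D))$, completing the chain of inequalities. There is no real obstacle here; the entire content of the corollary is that $\FPdim_{\C}$ fits the hypotheses of Theorem~\ref{thm:main}, which the preparatory paragraph before the corollary arranges, and the main theorem does the rest of the work.
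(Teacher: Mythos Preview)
Your proposal is correct and matches the paper's approach exactly: the paper gives no explicit proof of this corollary, treating it as an immediate consequence of Theorem~\ref{thm:main} applied to $f = \FPdim_{\C}$ together with the preceding paragraph establishing that $\FPdim_{\C}$ is a nontrivial additive function with $\FPdim_{\C}(\unit)=1$ and $\ell(M) \le \FPdim_{\C}(M)$ for all $M$.
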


When $\C$ is the category $\mod A$ of finitely generated left modules over a finite dimensional Hopf algebra $A$, then the Frobenius-Perron dimension coincides with the ordinary vector space dimension, as explained in \cite[Example 4.5.5]{EGNO} in the case of group algebras. We therefore obtain the following corollary.

\begin{corollary}\label{cor:Hopf}
Let $k$ be a field of characteristic not $2$, and $A$ a finite dimensional Hopf algebra over $k$. Suppose that $A$ satisfies \emph{\textbf{Fg}}, and that the Krull dimension $d$ of the even cohomology ring $\Ho (A) = \Coh^{2*}(A)$ is at least $8$. Then there exist infinitely many non-isomorphic and nontrivial  bounded complexes $D \in \K^b ( \mod A )$ of projective left modules, with 
$$\sum_{i \in \mathbb{Z}} \dim_k \Homol_i(D) < 2^d$$
\end{corollary}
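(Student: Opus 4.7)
The plan is to derive this corollary directly from Corollary \ref{cor:mainFPdim}, by interpreting the finite tensor category in question as $\C = \mod A$ and then identifying the Frobenius--Perron dimension with the $k$-vector space dimension.

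First, I would verify that the hypotheses of Corollary \ref{cor:mainFPdim} are met. Since $A$ is a finite dimensional Hopf algebra over $k$, the category $\mod A$ of finitely generated left $A$-modules, equipped with the tensor product $\ot_k$ (with diagonal action via the comultiplication) and the unit object $\unit = k$ (with action via the counit), is a finite tensor $k$-category; this is recalled in the Preliminaries. By assumption, $A$ satisfies \textbf{Fg} and the Krull dimension of $\Ho(A) = \Coh^{2*}(A)$ is at least $8$. Note that $\Coh^*(\mod A)$ coincides with $\Ext_A^*(k,k)$, the usual cohomology ring of $A$, so the hypotheses of Corollary \ref{cor:mainFPdim} hold for $\C = \mod A$.

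Next, I would apply Corollary \ref{cor:mainFPdim} to obtain infinitely many non-isomorphic nontrivial bounded complexes $D \in \K^b(\mod A)$ of projective $A$-modules satisfying
\[
\sum_{i \in \mathbb{Z}} \FPdim_{\mod A}(\Homol_i(D)) < 2^d.
\]
The remaining step is to identify $\FPdim_{\mod A}(M)$ with $\dim_k M$ for every finitely generated $A$-module $M$. Both quantities are additive on short exact sequences and agree on $\unit = k$, so by induction on the length of a Jordan--H\"older series it suffices to check the equality on simple objects. For this, one uses the standard fact (referenced in the paper via \cite[Example 4.5.5]{EGNO}) that in $\mod A$ for a finite dimensional Hopf algebra, the Frobenius--Perron dimension of a simple module equals its vector space dimension; this follows from the general principle that $\FPdim_{\C}$ is characterized as the unique additive character on the Grothendieck ring with positive values on simples, and $\dim_k$ provides such a character in the Hopf algebra setting (the tensor product being $\ot_k$ ensures $\dim_k(M \ot N) = \dim_k M \cdot \dim_k N$).

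With this identification, the inequality $\sum_i \FPdim_{\mod A}(\Homol_i(D)) < 2^d$ translates directly into $\sum_i \dim_k \Homol_i(D) < 2^d$, completing the proof. There is no real obstacle here; the work has already been done in Theorem \ref{thm:main} and Corollary \ref{cor:mainFPdim}, and this corollary is essentially a reformulation tailored to the Hopf algebra setting, singling out the case where the abstract dimension function on $\C$ becomes the familiar $k$-linear dimension.
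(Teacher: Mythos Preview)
Your proposal is correct and follows exactly the paper's own approach: the corollary is deduced from Corollary~\ref{cor:mainFPdim} by taking $\C = \mod A$ and using that for a finite dimensional Hopf algebra the Frobenius--Perron dimension coincides with the ordinary vector space dimension (the paper cites \cite[Example 4.5.5]{EGNO} for this). Your additional justification of the identification $\FPdim = \dim_k$ via additivity and multiplicativity on the Grothendieck ring is a helpful elaboration of a point the paper leaves implicit.
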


In particular, we recover the following result for group algebras of finite groups, recently proved by Carlson in \cite{Ca}. Recall that for a prime $p$ and a finite group $G$, the $p$-rank of $G$ is the maximal rank of an elementary abelian $p$-subgroup of $G$. By a result of Quillen (cf.\ \cite{Q}), this equals the Krull dimension of the (even part of the) cohomology ring $\Coh^* (G,k)$ when the ground field $k$ has characteristic $p$. Recall also that when $G$ is a $p$-group, then every projective module is free.

\begin{corollary}\label{cor:groups}\cite[Theorem]{Ca}
Let $k$ be a field of characteristic $p \ge 3$, and $G$ a finite group of $p$-rank $d \ge 8$. Then there exist infinitely many non-isomorphic and nontrivial  bounded complexes $D \in \K^b ( \mod kG )$ of projective left modules, with 
$$\sum_{i \in \mathbb{Z}} \dim_k \Homol_i(D) < 2^d$$
In particular, when $G$ is a $p$-group, then the complexes are complexes of free modules.
\end{corollary}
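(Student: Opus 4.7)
The plan is that this corollary is an immediate specialization of Corollary \ref{cor:Hopf} to the Hopf algebra $A = kG$, so the proof amounts to verifying the two hypotheses of that corollary and then checking the final claim about free modules.

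First I would observe that $kG$ is a finite dimensional Hopf algebra, so that $\mod kG$ is a finite tensor $k$-category, and $\Coh^*(kG)$ agrees with the ordinary group cohomology ring $\Coh^*(G,k)$ via the standard identification. Next, I would note that the finiteness condition \textbf{Fg} for $kG$ is exactly the classical Evens--Venkov theorem (already cited in the paper): $\Coh^*(G,k)$ is a finitely generated $k$-algebra, and $\Ext_{kG}^*(M,M)$ is a finitely generated $\Coh^*(G,k)$-module for every finitely generated $kG$-module $M$. Since $p \ge 3$ we have $p \neq 2$, so $\Ho(kG) = \Coh^{2*}(G,k)$, and \textbf{Fg} descends to this even part as explained in Section \ref{sec:prelim}.

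For the Krull dimension hypothesis, I would invoke Quillen's theorem \cite{Q}: the Krull dimension of $\Ho(kG) = \Coh^{2*}(G,k)$ equals the $p$-rank of $G$, which by assumption is at least $8$. Thus both hypotheses of Corollary \ref{cor:Hopf} are satisfied, and that corollary produces infinitely many non-isomorphic nontrivial bounded complexes $D \in \K^b(\mod kG)$ of projective modules with $\sum_{i \in \mathbb{Z}} \dim_k \Homol_i(D) < 2^d$.

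Finally, for the parenthetical statement about $p$-groups, I would recall the well-known fact that if $G$ is a finite $p$-group and $k$ has characteristic $p$, then $kG$ is a local ring and hence every finitely generated projective $kG$-module is free. Applying this to each term of the complexes $D$ constructed above gives the final clause. There is no real obstacle here: the whole point is that once the machinery of Theorem \ref{thm:main} and Corollary \ref{cor:Hopf} is available, the group-algebra case collapses to citing Evens--Venkov, Quillen, and the locality of $kG$ for $p$-groups.
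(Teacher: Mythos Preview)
Your proposal is correct and follows exactly the approach the paper takes: the discussion preceding the corollary invokes Evens--Venkov for \textbf{Fg}, Quillen's theorem to identify the Krull dimension of $\Ho(kG)$ with the $p$-rank, and the fact that projective $kG$-modules are free when $G$ is a $p$-group, so that the result drops out of Corollary~\ref{cor:Hopf}.
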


\section{Finite dimensional algebras}\label{sec:findimalg}

In this section, we briefly sketch a version of Theorem \ref{thm:main} for certain algebras. As before, we fix a field $k$, not necessarily algebraically closed, and we take a finite dimensional indecomposable $k$-algebra $A$. All modules are assumed to be finitely generated; we denote by $\mod A$ the category of left such modules.

\sloppy As for finite tensor categories, there is a theory of support varieties for left $A$-modules; we refer to \cite{EHSST, SS04,S} for details. Let $\Hoch^*(A)$ be the Hochschild cohomology ring of $A$, i.e.\ $\Hoch^n(A) = \Ext_{A^e}^n(A,A)$, where $A^e = A \ot_k A^{\op}$ is the enveloping algebra. By a classical result of Gerstenhaber (cf.\ \cite{G}), this ring is graded-commutative. For every $M \in \mod A$, there is a homomorphism
\begin{center}
\begin{tikzpicture}
\diagram{d}{3em}{4em}{
\Hoch^*(A) & \Ext_A^*(M,M) \\
 };
\path[->, font = \scriptsize, auto]
(d-1-1) edge node{$- \ot_A M$} (d-1-2);
\end{tikzpicture}
\end{center}
of graded $k$-algebras, and the corresponding left and right $\Hoch^*(A)$-module structures on $\Ext_A^*(M,M)$ coincide up to a sign. Now define $H$ to be $\Hoch^{2*}(A)$ when the characteristic of $k$ is not $2$, and $\Hoch^*(A)$ when the characteristic is $2$. Note that since $A$ is indecomposable as a $k$-algebra, its center $Z(A)$ is a commutative local ring, say with maximal ideal $\n$. As $\Hoch^0(A) = Z(A)$, we see that $H$ has a unique homogeneous maximal ideal $\m_0 = \n \oplus H \cap \Hoch^{\ge 1}(A)$. The \emph{support variety} of $M$ is
$$V_H(M) = \{ \m_0 \} \cup \{ \m \in \Maxspec H \mid \Ann_H \Ext_A^*(M,M) \subseteq \m \}$$

\begin{definition}
The algebra $A$ satisfies \textbf{Fg} if $\Hoch^*(A)$ is finitely generated, and $\Ext_A^*(M,M)$ is a finitely generated $\Hoch^*(A)$-module for every $M \in \mod A$.
\end{definition}

As explained in the proof of \cite[Proposition 5.7]{S}, the finiteness condition is equivalent to the following: $H$ is finitely generated, and $\Ext_A^*(M,M)$ is a finitely generated $H$-module for every $A$-module $M$. Moreover, it suffices to require that the $H$-module $\Ext_A^*(A/ \mathfrak{r}_A,A/ \mathfrak{r}_A)$ be finitely generated, where $\mathfrak{r}_A$ is the radical of $A$. As shown in \cite{EHSST}, when \textbf{Fg} holds, then as for finite tensor categories, many of the results on support varieties over group algebras carry over.

\begin{remark}\label{rem:fg}
If $A$ is a finite dimensional Hopf algebra over $k$, then we now have two different versions of the condition \textbf{Fg}; one in terms of the cohomology ring $\Coh^*(A)$ and $\mod A$ viewed as a finite tensor category, and one in terms of the Hochschild cohomology ring $\HH^*(A)$ and $\mod A$ viewed as just an ordinary module category. However, by \cite[Lemma 4.2]{B2} they are equivalent. Consequently,  Corollary \ref{cor:Hopf} can also be deduced from the results in this section.
\end{remark}

We now construct complexes as in Construction \ref{con:complex}. Fix a minimal projective bimodule resolution $(Q_n, d_n)$ of $A$, and let $\eta \in \Hoch^*(A)$ be a nonzero homogeneous element of degree $n \ge 2$. This element corresponds to a map $\hat{\eta} \colon \Omega_{A^e}^n(A) \longrightarrow A$, and the pushout with the inclusion $\Omega_{A^e}^n(A) \longrightarrow Q_{n-1}$ gives a commutative diagram
\begin{center}
\begin{tikzpicture}
\diagram{d}{2.8em}{2.8em}{
0 & \Omega_{A^e}^n ( A ) & Q_{n-1} & Q_{n-2} & \cdots & Q_0 & A & 0 \\
0 & A & M_{\eta} & Q_{n-2} & \cdots & Q_0 & A & 0 \\
 };
\path[->, font = \scriptsize, auto]
(d-1-1) edge (d-1-2) 
(d-1-2) edge (d-1-3) 
(d-1-3) edge node{$d_{n-1}$} (d-1-4)
(d-1-4) edge node{$d_{n-2}$} (d-1-5)
(d-1-5) edge node{$d_1$} (d-1-6)
(d-1-6) edge node{$d_0$} (d-1-7)
(d-1-7) edge (d-1-8)
(d-2-1) edge (d-2-2) 
(d-2-2) edge node{$\mu$} (d-2-3) 
(d-2-3) edge node{$\rho$} (d-2-4)
(d-2-4) edge node{$d_{n-2}$} (d-2-5)
(d-2-5) edge node{$d_1$} (d-2-6)
(d-2-6) edge node{$d_0$} (d-2-7)
(d-2-7) edge (d-2-8)
(d-1-2) edge node{$\hat{\eta}$} (d-2-2)
(d-1-3) edge (d-2-3)
(d-1-4) edge node{$1$} (d-2-4)
(d-1-6) edge node{$1$} (d-2-6)
(d-1-7) edge node{$1$} (d-2-7);
\end{tikzpicture}
\end{center}
with exact rows. We denote the corresponding bimodule complex
\begin{center}
\begin{tikzpicture}
\diagram{d}{2.8em}{2.8em}{
\cdots & 0 & M_{\eta} & Q_{n-2} & \cdots & Q_0 & 0 & \cdots \\
 };
\path[->, font = \scriptsize, auto]
(d-1-1) edge (d-1-2) 
(d-1-2) edge (d-1-3) 
(d-1-3) edge node{$\rho$} (d-1-4) 
(d-1-4) edge node{$d_{n-2}$} (d-1-5) 
(d-1-5) edge node{$d_1$} (d-1-6)
(d-1-6) edge (d-1-7)
(d-1-7) edge (d-1-8);
\end{tikzpicture}
\end{center}
by $C_{\eta}$. As for the complexes we used earlier, this one has two nonzero homology modules $\Homol_0 ( C_{\eta} ) \simeq \Homol_{n-1} ( C_{\eta} ) \simeq A$, and the natural chain map $\nu \colon \Sigma^{n-1} C_{\eta} \longrightarrow C_{\eta}$ given by the composition $\mu \circ d_0$ induces an isomorphism $\bar{\nu}$ between these. 

\begin{remark}\label{rem:projective}
Every projective bimodule is projective as a one-sided (left or right) $A$-module. This can be seen by noting that the free bimodule $A^e$ is left and right projective. Now consider the bottom exact sequence in the above diagram. Since $A$ is obviously left and right projective, all the underlying short exact sequences split. Therefore all the modules in the complex $C_{\eta}$ are one-sided projective, and the same holds for all the cycles and the boundaries.
\end{remark}

Let $\P$ be the full subcategory of $\mod A^e$ containing the modules that are projective as one-sided (left and right) $A$-modules. This is an additive (even exact) subcategory of $\mod A^e$, with a monoidal structure induced by the tensor product over $A$. For if $B_1$ and $B_2$ are modules in $\P$, that is, one-sided projective bimodules, then $B_1 \ot_A B_2$ is also one-sided projective, by the hom-tensor adjunction. The category $\K^b ( \P )$ is a triangulated tensor category, with an exact tensor product. Moreover, it acts on the triangulated category $\K^b ( \mod A )$ in the sense of \cite{BKSS}. This action is exact, that is, given a complex $C \in \K^b ( \mod A )$, the functor 
\begin{center}
\begin{tikzpicture}
\diagram{d}{3em}{4em}{
\K^b ( \P ) & \K^b ( \mod A ) \\
 };
\path[->, font = \scriptsize, auto]
(d-1-1) edge node{$- \ot_A C$} (d-1-2);
\end{tikzpicture}
\end{center}
is exact. It is also exact in the other argument, as a triangulated endofunctor on $\K^b ( \mod A )$.

We now prove the analogue of Theorem \ref{thm:main}. We denote by $\cx_A (M)$ the complexity of an $A$-module $M$, that is, the rate of growth of the dimensions of the modules in its minimal projective resolution. The maximal complexity is obtained by $A / \mathfrak{r}_A$ (and therefore by one of the simple modules), and there is an inequality $\cx_A ( A / \mathfrak{r}_A ) \le \dim H$ when \textbf{Fg} holds. As explained in the proof of \cite[Corollary 3.6]{B}, equality holds whenever $A / \mathfrak{r}_A \ot_k A / \mathfrak{r}_A$ is a semisimple ring, for example when $k$ is algebraically closed.

\begin{theorem}\label{thm:findimmain}
Let $k$ be a field of characteristic not $2$, and $A$ a finite dimensional indecomposable selfinjective $k$-algebra. Suppose that $A$ satisfies \emph{\textbf{Fg}}, and that  there exists a simple $A$-module $S$ of complexity $c \ge 8$. Then there exist infinitely many non-isomorphic and nontrivial bounded complexes $D \in \K^b ( \mod A )$ of projective modules, with 
$$\frac{1}{\dim_k S} \sum_{i \in \mathbb{Z}} \dim_k \Homol_i(D)  < 2^c$$
In particular, if there exists a simple one-dimensional module of complexity $c \ge 8$, then there exist infinitely many non-isomorphic and nontrivial such complexes $D$ with
$$\sum_{i \in \mathbb{Z}} \dim_k \Homol_i(D)  < 2^c$$
\end{theorem}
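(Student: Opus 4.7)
The plan is to transpose the argument of Theorem \ref{thm:main} to the module category $\mod A$, with the action of the triangulated tensor category $\K^b(\P)$ on $\K^b(\mod A)$ via $-\ot_A-$ playing the role of the internal tensor product on $\K^b(\C)$, and with the simple module $S$ playing the role of the unit object $\unit$. Since $\Ext_A^*(S,S)$ is a finitely generated $H$-module under \textbf{Fg}, the complexity hypothesis gives $\dim H/\Ann_H\Ext_A^*(S,S) = c$. Graded Noether normalization then produces homogeneous $\eta_1, \ldots, \eta_c \in H$, which (after replacing them with suitable powers) we may take to be of the same positive even degree $n$, whose images form a homogeneous system of parameters modulo $\Ann_H \Ext_A^*(S,S)$; equivalently, $V_H(S) \cap Z(\eta_1) \cap \cdots \cap Z(\eta_c) = \{\m_0\}$.

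Next, I will build the bimodule complexes $C_{\eta_i}$ as in the excerpt, and form $C = C_{\eta_1} \ot_A \cdots \ot_A C_{\eta_c} \in \K^b(\P)$. The complex $\widetilde{C} = C \ot_A S \in \K^b(\mod A)$ is then a bounded complex of projective left modules: terms in which some $Q_j$ appears as a factor are projective because $Q_j \ot_A -$ takes values in projective left modules, while the rightmost term $M_{\eta_1} \ot_A \cdots \ot_A M_{\eta_c} \ot_A S$ is projective by a support-variety argument parallel to Lemma \ref{lem:projective}. Specifically, the kernel $L_{\eta_i}$ of $\hat\eta_i$ satisfies $V_H(L_{\eta_i} \ot_A N) = V_H(N) \cap Z(\eta_i)$ for every $N \in \mod A$ (a standard feature of Carlson modules in the Hochschild setting, cf.\ \cite{EHSST, S}), and the projective-middle-term exact sequence $0 \to L_{\eta_i} \ot_A N \to Q_{n-1} \ot_A N \to M_{\eta_i} \ot_A N \to 0$ transfers this equality to $M_{\eta_i} \ot_A N$; iterating starting from $N = S$ gives the trivial variety $\{\m_0\}$.

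Third, the bimodule chain maps $\nu_i : \s^{n-1} C_{\eta_i} \to C_{\eta_i}$ induce, via the natural isomorphisms $\sigma^m_{c,i}$ at the bimodule level and the exact action of $\K^b(\P)$ on $\K^b(\mod A)$, chain maps $\theta_i : \s^{n-1} \widetilde{C} \to \widetilde{C}$. The identical computation as in Theorem \ref{thm:main}, carried out at the bimodule level before tensoring with $S$, shows that the $\theta_i$ anticommute and square to zero, hence generate a $c$-fold exterior algebra. The K\"unneth formula applies because each of the relevant short exact sequences of bimodules is split as a sequence of one-sided modules (Remark \ref{rem:projective}), so $\Homol_{tm}(\widetilde{C})$ is a direct sum of $\binom{c}{t}$ copies of $S$ with $m = n-1$, and the $\bar\theta_j$ permute these summands exactly as in the main theorem.

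Finally, I apply the Lefschetz element $w = \theta_1\theta_2 + \theta_3\theta_4 + \theta_5\theta_6 + \theta_7\theta_8$ and form the mapping cone $D$ of $w : \s^{2m}\widetilde{C} \to \widetilde{C}$; equivalently, $D \simeq D' \ot_A C'' \ot_A S$, where $D'$ is the bimodule mapping cone of $w$ acting on $C' = C_{\eta_1} \ot_A \cdots \ot_A C_{\eta_8}$ and $C'' = C_{\eta_9} \ot_A \cdots \ot_A C_{\eta_c}$. The injectivity and surjectivity analysis of $\bar w$ in low and high homological degrees goes through verbatim because it depends only on the combinatorics of the exterior algebra action on the $S$-labeled summands and on $\car k \neq 2$. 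The summation then yields $\sum_i \dim_k \Homol_i(D) = (2^8 - 4) 2^{c-8} \dim_k S < 2^c \dim_k S$, and replacing the $\eta_i$ by $\eta_i^s$ for $s \ge 1$ produces complexes of strictly increasing length, hence infinitely many pairwise non-isomorphic examples. The ``in particular'' clause follows by specializing to $\dim_k S = 1$. The principal obstacle will be the variety formula $V_H(L_{\eta_i} \ot_A N) = V_H(N) \cap Z(\eta_i)$ in the Hochschild setting, where one-sided modules are twisted by bimodule operators, and checking that the K\"unneth computation remains valid for the one-sided tensor product $-\ot_A S$.
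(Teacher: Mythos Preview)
Your proposal is correct and follows essentially the same route as the paper's own proof: choose a homogeneous system of parameters $\eta_1,\dots,\eta_c$ for $H/\Ann_H\Ext_A^*(S,S)$, form the bimodule complexes $C_{\eta_i}$, tensor over $A$ and act on $S$, use the K\"unneth theorem (licensed by one-sided projectivity, Remark~\ref{rem:projective}) to identify the homology with copies of $S$, apply the Lefschetz element and compute $(2^8-4)2^{c-8}\dim_k S$, and vary $s$ in $\eta_i^s$ to get infinitely many examples. The only point where the paper is slightly more explicit is the projectivity of a term $B_1\ot_A\cdots\ot_A B_c\ot_A S$ when the projective bimodule $Q_j$ sits in the middle rather than on the left: one first observes that $Q_j\ot_A(\cdots)\ot_A S$ is left projective, and then uses that the remaining $M_{\eta_i}$'s on the left are right-projective together with the hom--tensor adjunction; for the single term with no $Q_j$ the paper cites \cite[Proposition~4.3]{EHSST} directly rather than passing through the kernels $L_{\eta_i}$, but your Carlson-module reduction reaches the same conclusion.
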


\begin{proof}
The proof is similar to that of Theorem \ref{thm:main}, so we only give a sketch of the construction of one such complex $D$. The last part of the proof of Theorem \ref{thm:main} then applies, and gives a family of other complexes. We shall apply some of the results from \cite{EHSST}, in which it is assumed that the ground field is algebraically closed. However, the proofs of the results that we use do not require this assumption.

Let $H =  \Hoch^{2*}(A)$, and denote the homogeneous ideal $\Ann_H \Ext_A^*(M,M)$ by $\az$. By \cite[Proposition 2.1]{EHSST}, the Krull dimension of $H / \az$ is $c$, and so it contains a homogeneous system of parameters with $c$ elements. When we lift these to $H$ we obtain homogeneous elements $\eta_1, \dots, \eta_c$ of positive degrees, and we may suppose that they are of the same even degree $n$; we denote the odd integer $n-1$ by $m$.

For each $1 \le i \le 8$, consider the bimodule complex $C_{\eta_i}$ and the chain map $\nu_i$ that we defined earlier in this section (recall that $c \ge 8$). By Remark \ref{rem:projective}, the cycles and boundaries in $C_{\eta_i}$ are projective as one-sided modules, and so are the two nonzero homology modules $\Homol_0 ( C_{\eta_i} )$ and $\Homol_{m} ( C_{\eta_i} )$, since these are both isomorphic to $A$. Therefore, by \cite[Theorem 2.7.1 and Corollary 2.7.2]{Ben}, the K{\"u}nneth Theorem holds whenever we tensor $C_{\eta_i}$ with a complex in $\K^b ( \P )$ or $\K^b ( \mod A )$. Now consider the complex $C_{\eta_8} \ot_A \cdots \ot_A C_{\eta_1} \ot_A S$ in $\K^b ( \mod A )$; let us denote it by $C'$. It has homology in degrees $0,m,2m, \dots, 8m$, and $\Homol_{tm} (C')$ is the direct sum of ${{8}\choose{t}}$ copies of $S$.

The chain maps $\nu_i$ induce chain maps on $C_{\eta_8} \ot_A \cdots \ot_A C_{\eta_1}$, and in turn chain maps $\theta_i \colon \Sigma^m C' \longrightarrow C'$. These generate an $8$-fold exterior algebra in $\Hom_{\K^b( \mod A)}^*(C,C)$, and the Lefschetz element $w = \theta_1 \cdot \theta_2 +  \theta_3 \cdot \theta_4 +  \theta_5 \cdot \theta_6 +  \theta_7 \cdot \theta_8$ induces a homomorphism
\begin{center}
\begin{tikzpicture}
\diagram{d}{3em}{3em}{
\Homol_i \left ( C' \right ) & \Homol_{i+2m} \left ( C' \right )  \\
 };
\path[->, font = \scriptsize, auto]
(d-1-1) edge node{$\bar{w}$} (d-1-2);
\end{tikzpicture}
\end{center}
which is injective for $i \in \{ 0,m,2m,3m \}$, and surjective for $i \in \{ 3m,4m,5m,6m \}$. The homology of the mapping cone $D'$ of $w$ then satisfies
$$\sum_{i \in \mathbb{Z}} \dim_k \Homol_i \left ( D' \right ) = \left ( 2^8-4 \right ) \dim_k S$$

Now let $C'' = C_{\eta_c} \ot_A \cdots \ot_A C_{\eta_9}$, with the convention that this is the stalk complex on $A$ if $c=8$, and set $C = C'' \ot_A C'$. From the distinguished triangle on the chain map $w$, we obtain a distinguished triangle
\begin{center}
\begin{tikzpicture}
\diagram{d}{3em}{3em}{
\s^{2m}C & C &  C'' \ot_A D' & \s^{2m+1} C \\
 };
\path[->, font = \scriptsize, auto]
(d-1-1) edge node{$w$} (d-1-2)
(d-1-2) edge (d-1-3)
(d-1-3) edge (d-1-4);
\end{tikzpicture}
\end{center}
in $\K^b ( \mod A )$, where we have denoted the chain map $1 \ot_A w$ on $C = C'' \ot_A C'$ by $w$ as well. Let $D$ be the mapping cone of this chain map. The complex $C''$ has nonzero homology in degrees $0,m,2m, \dots, (c-8)m$, with $\Homol_{tm} (C'')$ the direct sum of ${{c-8}\choose{t}}$ copies of $A$. This gives
$$\sum_{i \in \mathbb{Z}} \dim_k \Homol_i(D) = \sum_{i \in \mathbb{Z}} \dim_k \Homol_i(C'' \ot_A D') = 2^{c-8} \left ( 2^8-4 \right ) \dim_k S < 2^c \dim_k S$$

The complex $D$ consists of projective modules. To see this, note that each nonzero term in $C$ is a direct sum of modules of the form $B_1 \ot_A \cdots \ot_A B_c \ot_A C$, where the bimodules $B_i$ appear in the complexes $\eta_1, \dots, \eta_c$. Suppose that at least one of the $B_i$ is a projective bimodule $Q_j$. In general, if $Q$ is a projective bimodule, and $M$ is a left $A$-module, then $Q \ot_A M$ is a projective $A$-module; this follows from the fact that $A^e \ot_A M$ is projective. Now it follows that if $B_1$ is projective, then $B_1 \ot_A \cdots \ot_A B_c \ot_A C$ is projective. Otherwise, the tensor product is of the form $M_{\eta_{i_1}} \ot_A \cdots \ot_A M_{\eta_{i_s}} \ot_A Q \ot_A \cdots \ot_A C$, where $M_{\eta_i}$ is the leftmost nonzero bimodule in the complex $C_{\eta_i}$, and $Q$ is a projective bimodule. Since the $A$-module $Q \ot_A \cdots \ot_A C$ is projective, and the bimodule $M_{\eta_{i_1}} \ot_A \cdots \ot_A M_{\eta_{i_s}}$ is one-sided projective, it follows from the hom-tensor adjunction that $M_{\eta_{i_1}} \ot_A \cdots \ot_A M_{\eta_{i_s}} \ot_A Q \ot_A \cdots \ot_A C$ is projective as well. Finally, the only module in $C$ that is not a direct sum of modules of the above type is the leftmost nonzero module, namely $M_{\eta_c} \ot_A \cdots \ot_A M_{\eta_1} \ot_A S$. However, by \cite[Proposition 4.3]{EHSST}, the support variety of this module is $Z( \eta_1, \dots \eta_c ) \cap V_H(S)$, which equals $Z( \eta_1, \dots \eta_c ) \cap Z( \az )$. By construction, the images of the $\eta_i$ form a homogeneous system of parameters in $H / \az$, and so the support variety of the module is trivial. Therefore, by \cite[Proposition 2.2]{EHSST} and the fact that $A$ is selfinjective, the module is projective. This shows that the complex $C$, and therefore also $D$, consists of projective modules. 
\end{proof}

When $A$ is a local algebra, then it is indecomposable, and every projective module is free. Moreover, if such an algebra satisfies \textbf{Fg}, then it is selfinjective, since it is Gorenstein by \cite[Proposition 2.2]{EHSST}. Therefore, the theorem then takes the following form.

\begin{corollary}\label{cor:findimlocal}
Let $k$ be a field of characteristic not $2$, and $A$ a finite dimensional local $k$-algebra with maximal left ideal $\m$. Suppose that $A$ satisfies \emph{\textbf{Fg}}, and that $\cx_A ( A / \m ) = c \ge 8$. Then there exist infinitely many non-isomorphic and nontrivial bounded complexes $D \in \K^b ( \mod A )$ of free modules, with 
$$\frac{1}{\dim_k A/ \m} \sum_{i \in \mathbb{Z}} \dim_k \Homol_i(D)  < 2^c$$
In particular, if $\dim_k A/ \m =1$, then there exist infinitely many non-isomorphic and nontrivial such complexes $D$ with
$$\sum_{i \in \mathbb{Z}} \dim_k \Homol_i(D)  < 2^c$$
\end{corollary}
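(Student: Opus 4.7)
The plan is to specialize Theorem~\ref{thm:findimmain} to the local setting, where the hypotheses collapse into the assumptions of this corollary and projective modules become free. The main work is to verify the three hypotheses of Theorem~\ref{thm:findimmain}: indecomposability of $A$, selfinjectivity, and the existence of a simple module of complexity at least $8$.

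For indecomposability, I would observe that a local ring admits no nontrivial idempotents, so $A$ is automatically indecomposable as a $k$-algebra. For selfinjectivity, I would follow the hint given in the paragraph preceding the corollary: \textbf{Fg} implies that $A$ is Gorenstein by \cite[Proposition~2.2]{EHSST}, and a finite dimensional local Gorenstein $k$-algebra is selfinjective (the socle of the unique indecomposable projective $A$ is simple, and Matlis duality then upgrades Gorenstein to selfinjective in this finite dimensional setting). For the last hypothesis, since $A$ is local, $A/\m$ is the unique simple $A$-module up to isomorphism, and by assumption $\cx_A(A/\m) = c \ge 8$.

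Once Theorem~\ref{thm:findimmain} is invoked with $S = A/\m$, it produces infinitely many non-isomorphic and nontrivial bounded complexes $D \in \K^b(\mod A)$ of projective modules satisfying
\[
\frac{1}{\dim_k A/\m} \sum_{i \in \mathbb{Z}} \dim_k \Homol_i(D) < 2^c.
\]
To finish, I would observe that over a finite dimensional local $k$-algebra every finitely generated projective module is free: by Krull--Schmidt the indecomposable projectives correspond bijectively to the isomorphism classes of simple modules, of which $A$ has only one, so every indecomposable projective is isomorphic to $A$ itself. This converts the projective complexes produced by the theorem into complexes of free modules, and the "in particular" clause is then the special case $\dim_k A/\m = 1$. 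Since essentially all the content lies in Theorem~\ref{thm:findimmain} and the cited implication \textbf{Fg} $\Rightarrow$ Gorenstein, I do not foresee any genuine obstacle; the argument is a direct translation.
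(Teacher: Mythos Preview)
Your proposal is correct and follows essentially the same approach as the paper: the paper's proof is precisely the short paragraph preceding the corollary, which notes that a local algebra is indecomposable with projectives free, and that \textbf{Fg} forces selfinjectivity via Gorensteinness from \cite[Proposition~2.2]{EHSST}, whereupon Theorem~\ref{thm:findimmain} applies with $S = A/\m$. You have simply spelled out each of these reductions in slightly more detail.
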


We end with an example of a class of local algebras satisfying the assumptions of the last result.

\begin{example}
Let $k$ be a field, and fix integers $c \ge 1$ and $a_1, \dots, a_c \ge 2$, together with nonzero elements $q_{ij} \in k$ for $1 \le i<j \le c$. The corresponding \emph{quantum complete intersection} is the algebra
$$A = k \langle x_1, \dots, x_c \rangle / ( x_i^{a_i}, x_jx_i - q_{ij} x_ix_j )$$
which is finite dimensional, local and selfinjective. The simple module $k$ has complexity $c$. By \cite[Theorem 5.5]{BO}, this algebra satisfies \textbf{Fg} if and only if all the commutators $q_{ij}$ are roots of unity in $k$. Therefore, when this holds, the characteristic of $k$ is not $2$, and $c \ge 8$, there exist infinitely many non-isomorphic and nontrivial complexes $D$ of free modules, with
$$\sum_{i \in \mathbb{Z}} \dim_k \Homol_i(D)  < 2^c$$
Examples of such algebras are exterior algebras
$$k \langle x_1, \dots, x_c \rangle / ( x_i^2, x_jx_i + x_ix_j )$$
and finite dimensional commutative local complete intersections
$$k [x_1, \dots, x_c] / ( x_1^{a_1}, \dots, x_c^{a_c} )$$
for $c \ge 8$. Note that when we computed the dimension of the homology in the proof of Theorem \ref{thm:findimmain}, we actually obtained the precise number $2^c - 2^{c-6}$ in the case when the simple module is one-dimensional. Therefore, for the complete intersections above, we obtain (infinitely many) complexes of free modules as in \cite[Theorem 3.1]{IW}.
\end{example}

\end{document}